\theoremstyle{plain}
\newtheorem{thm}{Theorem}[section]
\newtheorem{lem}[thm]{Lemma}
\theoremstyle{definition}
\newtheorem{rmk}[thm]{Remark}
\numberwithin{equation}{section}
\newcommand{\sm}{\left(\begin{smallmatrix}}
\newcommand{\esm}{\end{smallmatrix}\right)}
\newfont{\FieldFont}{msbm10 scaled\magstep1}
\def\re{\textrm{Re}}
\def\im{\textrm{Im}}
\def\tr{\textrm{Tr}}
\definecolor{blue}{rgb}{0,0,1}
\definecolor{red}{rgb}{1,0,0}
\definecolor{green}{rgb}{0,.6,.2}
\definecolor{purple}{rgb}{1,0,1}
\long\def\red#1\endred{{\color{red}#1}}
\long\def\blue#1\endblue{{\color{blue}#1}}
\long\def\purple#1\endpurple{{\color{purple}#1}}
\long\def\green#1\endgreen{{\color{green}#1}}
\begin{document}

\title{Kernels for Products of Hilbert L-functions}

 \author{YoungJu Choie}

 \address{Department of Mathematics  \\
 Pohang University of Science and Technology (POSTECH)\\
 Pohang, 790--784, Korea}
  \email{yjc@postech.ac.kr; http://yjchoie.postech.ac.kr }

\author{Yichao Zhang}

 \address{School of Mathematics and Institute of Advanced Studies of Mathematics\\
 Harbin Institute of Technology\\ Harbin, 150001, P.R.China}
  \email{yichao.zhang@hit.edu.cn}


\thanks{Keynote: Hilbert modular form, special $L$-values, cusp form, double Eisenstein series, Petersson inner product, Rankin-Cohen bracket, kernel function}
 \thanks{2010
 Mathematics Subject Classification: primary 11F67, 11F41 ; secondary 11F03 }

\begin{abstract}
 We study kernel functions of $L$-functions and products of $L$-functions
 of Hilbert cusp forms over real quadratic fields.
 This extends the results on elliptic modular forms in \cite{CD1, CD2}. 
\end{abstract}
 \maketitle

\section{Introduction  }

One of the central problems in number theory is to explore the nature of special values of various 
Dirichlet series such as Riemann zeta function, modular $L$-functions, automorphic $L$-functions, etc. The known main idea to study arithmetic properties of the special values of modular $L$-functions is to  compare such values with certain inner product of modular forms.

Such an idea was first introduced by  Rankin \cite{R}, expressing the product of two critical $L$-values of an elliptic Hecke eigenform in terms of the Petersson scalar product of an elliptic Hecke eigenform  with a product of Eisenstein series.  
Much later Zagier (\cite{Z1977}, p 149 )  extended Rankin's result to express the product of any two critical $L$-values of an elliptic Hecke eigenform  in terms of the Petersson scalar product of the Hecke eigenform with the   Rankin-Cohen brackets  of two Eisenstein series. 
Shimura \cite{Sh} and Manin \cite{M}  developed theories to study arithmetic properties of   modular $L$-values on the critical strip.  Kohnen-Zagier   and Choie-Park-Zagier    \cite{KZ, CPZ} further studied the  space of modular forms whose $L$-values on the critical strip are rational and showed that such a space can be spanned by Cohen kernel introduced by Cohen \cite{C}. 
Recently double Eisenstein series has been introduced by Diamantis and O'Sullivan\cite{CD1, CD2} as a kernel yielding products of two $L$-values of elliptic Hecke eigenforms. It turns out that Rankin-Cohen brackets \cite {Z} of two Eisenstein series can be realized as a double Eisenstein series \cite{CD2}. 
Generalizing Cohen kernel, the arithmetic results of $L$-values by Manin \cite{M} and Shimura \cite{Sh} could be recovered \cite{CD1, CD2}.

The purpose of this paper to state above results to the space of Hilbert modular forms by extending kernel functions introduced in \cite{CD1, CD2}.
More precisely,  a double Hilbert Eisenstein series is a kernel function of two $L$-values of a primitive form in terms of the Petersson scalar product. Also one can recover the arithmetic results \cite{Sh} of  $L$-values of Hilbert cusp forms by studying Cohen kernel over real quadratic fields.
Furthermore it turns out that the Rankin-Cohen bracket of two Hilbert Eisenstein series is the special value of a double Hilbert Eisenstein series.

\medskip
 
\noindent {\bf Acknowledgment}: 
The first author was supported by NRF2018R1A4A1023 590 and 
 NRF2017R1A2B2001807. The second author was partially supported by by HIT Youth Talent Start-Up Grant and Grant of Technology Division of Harbin (RC2016XK001001).
\\

We thank the referee  for the valuable remarks, which led to improvements in the paper.

\section{Notations and Main Theorems}
Throughout of this paper, for simplicity, 
 we only consider the space of Hilbert modular forms over real quadratic fields $F$ with narrow class number one on the full Hilbert modular group $\Gamma=\text{SL}_2(\mathcal{O} ).$ 

\subsection{Notations}
Let $F$ be a real quadratic field with narrow class number equal to $1$. Let $D$, $\mathcal{O}$ and $\mathfrak d$ be the fundamental discriminant, the ring of integers and the different of $F$ respectively. 
Let $\textrm{N}$ and $\textrm{Tr}$ be the norm and the trace on $F$， 
defined by $\textrm{N}(a)=a a', \textrm{Tr}(a)=a+a'$ with 
$a'$ the algebraic conjugate of $a\in F$. 
We denote $a\gg 0$ for $a\in F$ if $a$ is totally positive, that is $a>0$ and $a'>0$. For $B \subset F$, let $B_+$ denote the subset of totally positive elements in $B$. So $\mathcal O_+$ and $\mathcal O^\times_+$ denote the set of totally positive integers and the set of totally positive units respectively.

For a $2\times 2$ matrix $\mathbf{\gamma}$ in $GL_2^+(F)$,  we usually denote its entries by
${\mathbf {\gamma}}=
\begin{smallmatrix}\begin{pmatrix}
 a_{\mathbf {\gamma}} &b_{\mathbf {\gamma}}\\
 c_{\mathbf {\gamma}} & d_{\mathbf {\gamma}}
 \end{pmatrix}
 \end{smallmatrix}$ and  $\gamma'=
 \begin{smallmatrix}\begin{pmatrix}
 a_{\mathbf {\gamma}}' &b_{\mathbf {\gamma}}'\\
 c_{\mathbf {\gamma}}' & d_{\mathbf {\gamma}}'
 \end{pmatrix}
  \end{smallmatrix}$.
The group $GL_2^+(F)$ acts on two copies of the complex upper half plane $\mathbb{H}^2$
by $\gamma z:=(\gamma z_1, \gamma' z_2)=(\frac{a_{\gamma}z_1+b_{\gamma}}{c_{\gamma}z_1+d_{\gamma}}, \frac{a_{\gamma}'z_2+b_{\gamma}'}{c_{\gamma}'
z_2+d_{\gamma}'})
$ as linear fractional transformations  for   all $  \gamma\in GL_2^+(F)$ and $ z=(z_1, z_2)\in \mathbb{H}^2$.

Let $\Gamma=\textrm{SL}_2(\mathcal O)$ be the modular group of $2\times 2$ matrices with determinant equal to one over $\mathcal O$. Denote $\Gamma_\infty$ the subgroup of upper-triangular elements and $\Gamma_\infty^+$ the subgroup of elements with totally positive diagonal entries in $\Gamma_\infty$.
Let $A$ denote the subgroup of diagonal elements in $\Gamma_\infty^+$, so $A=\{\text{diag}(\varepsilon,\varepsilon^{-1})\colon \varepsilon\in\mathcal O^\times_+\}$.
 Throughout the note, we employ the standard multi-index notation. In particular, for $\gamma\in \text{GL}_2^+(F)$, $z=(z_1,z_2) \in\mathbb{H}^2$ and $k\in\mathbb{Z}$, we denote $\mathbf{1}=(1,1)$, 
 $(\gamma z)^{k\mathbf{1}}=N(\gamma z)^k= (\gamma z_1)( \gamma' z_2)$, $ 
|z|=(|z_1|, |z_2|)$, $|z|^{k\mathbf{1}}=|z_1|^k|z_2|^k $   
and the automorphic factor by
\[j(\gamma,z)^{k\mathbf{1}}=N(j(\gamma,z))^k=j(\gamma,z_1)^kj(\gamma',z_2)^k=(c_\gamma z_1+d_\gamma)^k(c_\gamma'z_2+d_\gamma')^k.\]

For any function $f$ on $\mathbb H^2$ and $\gamma \in \textrm{GL}_2^+(F)$, define the slash operator by 
\[ (f|_k\gamma)(z)=N(\det(\gamma))^{\frac{k}{2}}N(j(\gamma,z))^{-k}f(\gamma z).\]
A Hilbert modular form of (parallel) weight $k$ for $\Gamma$ is a holomorphic function $f$ on $\mathbb{H}^2$ such that $f|_k\gamma=f$ for any $\gamma\in\Gamma$. Then $f$ has the following Fourier expansion
\[f(z)=a_f(0)+\sum_{\alpha\in\mathfrak d^{-1}_+}a_f(\alpha) e^{2\pi i\text{tr}(\alpha z)}.\] If $a_f(0)=0$, we call $f$ a Hilbert cusp form.
For a Hilbert cusp form $f$ and a Hilbert modular form $g$ of weight $k$ on $\Gamma$,
their Petersson scalar product is defined by
$$\langle f\, , g\rangle  :=\int_{\Gamma \backslash \mathbb{H}^2} f(z)\overline{g(z)}d\mu
=\int_{\mathcal{F}}    f(z)\overline{g(z)}d\mu,$$
where $\mathcal{F}$ is a fundamental domain of $\Gamma$ on $\mathbb{H}^2$ and \[d\mu= (y_1y_2)^{-2}dx_1dx_2dy_1dy_2 =N(y)^{-2}N(dx)N(dy).\]
Here  $z=x+iy, $ $Re(z)=x=(x_1,x_2)$ and  $Im(z)=y=(y_1, y_2)$.

Note that this ``unnormalized" Petersson inner product is different from Shimura's \cite{Sh}.
For a Hilbert cusp form $f$ of weight $k$ for $\Gamma$, define the associated L-function by
\[L(f,s)=\sum_{\alpha\in \mathfrak d^{-1}_+/\mathcal O_+^\times}a_f(\alpha) N(\alpha\mathfrak d)^{-s}=\sum_\mathfrak{a}a_f(\mathfrak a)N(\mathfrak a)^{-s},\]
where $a_f(\mathfrak a):=a_f(\alpha)$ for $\alpha\mathfrak d=\mathfrak a$.
It is known \cite{G}
 that the complete $L$-function satisfies
 \[\Lambda(f,s):= D^s(2\pi)^{-2s}\Gamma(s)^2L(f,s)=(-1)^k\Lambda(f,k-s) \]
and has an analytic continuation to the entire $\mathbb{C}$.

Next we recall the theory of Hecke operators on spaces of Hilbert modular forms. For each nonzero integral ideal $\mathfrak{n}$ of $\mathcal{O}$, let $M_\mathfrak{n}$ be the set of $2\times 2$ matrices $\gamma$ over $\mathcal O$ such that $\det(\gamma)\gg 0$ and $(\det(\gamma))=\mathfrak{n}$. Moreover, let $Z\cong \mathcal{O}^\times$ denote the $2\times 2$ scalar matrices with diagonal entries in $\mathcal O^\times$.
The $ \mathfrak{n}$-th Hecke operator $T_\mathfrak{n}$ on $S_k(\Gamma)$, the space of cusp forms for $\Gamma$  of parallel weight-$k$, is defined as
\[T_\mathfrak{n}(f(z))=N(\mathfrak n)^{ \frac{k}{2}-1}\sum_{\gamma\in Z\Gamma\backslash M_\mathfrak{n}} f|_k\gamma(z).\] The operators $T_\mathfrak{n}$ are self-adjoint with respect to the Petersson inner product and generate a commutative algebra. It follows that there exists a basis  $\mathcal H_k$, consisting of normalized cuspidal Hecke eigenforms,
of $S_k(\Gamma)$. We call elements in $\mathcal H_k$ ``primitive forms".
Here $f$ is normalized if the Fourier coefficient $a_f(\mathcal O)=1$ or equivalently if $\mathfrak d^{-1}=(\alpha)$ with $\alpha\gg 0$, then $a_f(\alpha)=1$. Therefore, for $f\in \mathcal H_k$, $T_\mathfrak{n}f=a_f(\mathfrak n)f$, so $a_f(\mathfrak n)$ is real. For details see Section 1.15 of \cite{G}.

\bigskip

\subsection{Main Theorems}
Fix $k\in \mathbb{Z}.$ We define the \emph{Cohen kernel} $\mathcal C_k^{Hil}(z;s) $ on $\mathbb{H}^2\times \mathbb{C}$ by
\begin{eqnarray}
\mathcal C_k^{Hil}(z;s)= \frac{1}{2} c_{k, s, D}^{-2} \sum_{\gamma\in A \backslash\Gamma}(\gamma z)^{-s\mathbf{1}}j(\gamma,z)^{-k\mathbf{1}}, 
\end{eqnarray}
with
\[c_{k, s,D}= \frac{D^{\frac{k-1}{2}} 2^{2-k}\pi \Gamma(k-1)}{e^{\frac{\pi i s}{2}} \Gamma(s) \Gamma(k-s)}\]
and $
A=\{\textrm{diag}(\varepsilon,\varepsilon^{-1})\colon
 \varepsilon\in\mathcal O_+^\times  \}.$ 
Note that if $k$ is odd, this definition gives zero function.
 
\medskip

\begin{thm}\label{cohen-analytic}({\bf Cohen kernel}) Let $k\geq 4$ be even.  Then the following hold:
\begin{enumerate} 
\item
$\mathcal C_k^{Hil}(z;s)$ converges absolutely and uniformly on all compact subsets in the region given by
\[1<\re(s)<k-1,\quad z\in\mathbb H^2.\] 

\item For each $s\in\mathbb{C}$,
\[\mathcal C_k^{Hil}(z;s)=\sum_{f\in\mathcal H_k}\frac{\Lambda(f,k-s)} {\langle f,f\rangle}f(z), \]
 where $\mathcal H_k$ is the set of  primitive forms s of weight $k$ on $\Gamma.$

\item  $\mathcal C_k^{Hil}(z;s)$
can be analytically continued to the whole $s$-plane and for each $s\in\mathbb{C}$, $\mathcal C_k^{Hil}(z;s)$ is a cusp form for $\Gamma$ of weight $k$ in $z$. 
\end{enumerate}
\end{thm}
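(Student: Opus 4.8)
The plan is to prove the three parts in the stated order, obtaining (3) as a formal consequence of (1) and (2): I first work on the strip $1<\re(s)<k-1$ where the series converges absolutely, establish the reproducing identity there, and only then continue in $s$ using the finite‑dimensionality of $S_k(\Gamma)$ and the analytic continuation of $\Lambda(f,\cdot)$ quoted in the excerpt.

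For (1) I would reduce to absolute convergence and use two elementary facts: for $z$ in a fixed compact set one has $|c_\gamma^{(i)}z_i+d_\gamma^{(i)}|\asymp_z\|(c_\gamma^{(i)},d_\gamma^{(i)})\|$ (a positive definite binary form), and the summand rewrites as $(\gamma z)^{-s\mathbf{1}}j(\gamma,z)^{-k\mathbf{1}}=(a_\gamma z+b_\gamma)^{-s\mathbf{1}}(c_\gamma z+d_\gamma)^{(s-k)\mathbf{1}}$. Since the summand is $A$-invariant, I parametrize $A\backslash\Gamma$ by the bottom row $(c_\gamma,d_\gamma)$ modulo $\mathcal O_+^\times$ together with the unipotent translations $n_\mu=\sm 1&\mu\\0&1\esm$, $\mu\in\mathcal O$, under which $a_\gamma z+b_\gamma\mapsto(a_\gamma z+b_\gamma)+\mu(c_\gamma z+d_\gamma)$ while $c_\gamma z+d_\gamma$ is fixed. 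The inner sum then becomes a full‑rank lattice sum $\sum_{\mu\in\mathcal O}|N(\gamma z+\mu)|^{-\re(s)}$ with $\gamma z\in\mathbb H^2$, convergent exactly for $\re(s)>1$; this gives the lower bound. Parametrizing instead by the top row and the opposite unipotent, the same lattice estimate with exponent $k-s$ yields the upper bound $\re(s)<k-1$, and the residual sum over rows modulo units is dominated by a Hilbert--Eisenstein series of weight $k>2$. These bounds are locally uniform in $(z,s)$, so $\mathcal C_k^{Hil}(\cdot;s)$ is holomorphic in $z$, transforms with weight $k$ under $\Gamma$ (by reindexing), and, since every term tends to $0$ as $N(\im z)\to\infty$, has vanishing constant term; hence $\mathcal C_k^{Hil}(\cdot;s)\in S_k(\Gamma)$.

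Part (2) is the heart. As the Hecke eigenbasis $\mathcal H_k$ is orthogonal, it suffices to prove $\langle\mathcal C_k^{Hil}(\cdot;s),f\rangle=\Lambda(f,k-s)$ for each $f\in\mathcal H_k$, and I would compute this by unfolding. Using that $f$ has weight $k$, the summand times $\overline{f}$ times the invariant weight‑$k$ measure collapses to $\phi(\gamma z)$ with $\phi(w)=N(w)^{-s}\overline{f(w)}N(\im w)^k$, because the factors $j(\gamma,z)^{-k\mathbf{1}}$, $\overline{j(\gamma,z)}^{-k\mathbf{1}}$ and $N(\im z)^k/N(\im\gamma z)^k$ multiply to $1$. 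Unfolding $\Gamma\backslash\mathbb H^2$ against $A\backslash\Gamma$ replaces the integral by $\int_{A\backslash\mathbb H^2}\phi\,d\mu$; inserting $\overline{f}=\sum_\alpha a_f(\alpha)e^{-2\pi i\tr(\alpha\bar z)}$ and unfolding the unit‑scaling of $A$ against the $\alpha$‑sum produces $\sum_{\beta}a_f(\beta)\int_{\mathbb H^2}N(z)^{-s}e^{-2\pi i\tr(\beta\bar z)}N(y)^{k-2}\,N(dx)N(dy)$, which factors over the two real places. At each place the $x$‑integral is the classical transform
\[
\int_{\mathbb R}(x+iy)^{-s}e^{-2\pi i\beta x}\,dx=\frac{(2\pi)^s}{\Gamma(s)}\,e^{-\pi is/2}\,\beta^{s-1}e^{-2\pi\beta y}\qquad(\beta>0),
\]
which vanishes unless $\beta>0$ (so only totally positive $\beta$ survive, consistent with cuspidality), and the ensuing $y$‑integral $\int_0^\infty e^{-4\pi\beta y}y^{k-2}\,dy=\Gamma(k-1)(4\pi\beta)^{-(k-1)}$ contributes a factor $\beta^{s-k}$. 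The product over the two places gives $N(\beta)^{s-k}$, and summing recovers $L(f,k-s)$ up to powers of $D$; assembling the Gamma‑factors with $c_{k,s,D}^{-2}$ produces exactly $\Lambda(f,k-s)$. Together with the orthogonal expansion $\mathcal C_k^{Hil}(\cdot;s)=\sum_f\langle\mathcal C_k^{Hil}(\cdot;s),f\rangle\,\langle f,f\rangle^{-1}f$ from (1), this is the asserted formula.

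Part (3) then follows at once: the right‑hand side $\sum_{f\in\mathcal H_k}\frac{\Lambda(f,k-s)}{\langle f,f\rangle}f(z)$ is a \emph{finite} combination of fixed cusp forms whose coefficients are entire in $s$ (since $\Lambda(f,\cdot)$ continues to all of $\mathbb C$), so it defines an entire $S_k(\Gamma)$‑valued function of $s$ agreeing with $\mathcal C_k^{Hil}(\cdot;s)$ on the strip, yielding both the continuation and the cusp‑form property for every $s$. I expect the main obstacle to lie in the two analytic inputs: pinning down the exact strip $1<\re(s)<k-1$ needs a uniform estimate for the $2$‑dimensional unipotent lattice sum $\sum_{\mu}|N(\gamma z+\mu)|^{-s}$ as $\im(\gamma z)\to 0$ and its interaction with the weight‑$k$ Eisenstein sum, and the unfolding in (2) requires care with the totally positive units—the $A$‑action scales by \emph{squares} of units while $L(f,\cdot)$ groups ideals by \emph{all} units—together with the precise bookkeeping of the constants that collapse into $\Lambda(f,k-s)$.
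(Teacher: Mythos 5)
The genuine gap is in part (1), which is an assertion of the theorem and not a removable technicality. Your two lattice estimates --- the unipotent sum $\sum_{\mu\in\mathcal O}|N(\gamma z+\mu)|^{-\re(s)}$ converging for $\re(s)>1$, and the residual sum being ``dominated by a Hilbert--Eisenstein series of weight $k$'' --- do not combine into a proof of convergence on the strip $1<\re(s)<k-1$, because the inner sum is not bounded uniformly in $\gamma$: it blows up like $N(\im(\gamma z))^{-\re(s)}$ whenever $\gamma z$ approaches a lattice point, and $\im(\gamma z)\to 0$ for all but finitely many cosets. Feeding this worst-case bound into the outer sum costs a factor $|N(j(\gamma,z))|^{2\re(s)}$ and leaves a series of the shape $\sum_\gamma |N(j(\gamma,z))|^{3\re(s)-k}$, i.e.\ convergence only on a strictly smaller strip; the symmetric estimate via the opposite unipotent has the dual defect, and the two cannot simply be spliced. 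You flag exactly this as ``the main obstacle,'' but flagging it is not closing it. The paper's device is precisely the missing idea: following Garrett, it establishes the $L^1$ bound $\int_{\Gamma\backslash\mathbb H^2_X}\sum_{\gamma\in A\backslash\Gamma}|N(j(\gamma,z))|^{-k}|\gamma z|^{-\sigma\mathbf 1}d\mu<\infty$ over the truncated region $N(\im z)<X$, unfolds the \emph{integral} rather than the sum, and splits $|Nz|^{-\sigma}$ with an auxiliary exponent $r$ satisfying $1<r<\sigma$ and $r<k-\sigma$; such an $r$ exists exactly when $1<\sigma<k-1$, and $L^1$ convergence of a series of holomorphic terms upgrades to locally uniform convergence. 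Your cuspidality claim (``every term tends to $0$'') has the same uniformity problem: termwise decay does not control the tail of the series up to the cusp.

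Part (2) is where you genuinely diverge from the paper, and your route is legitimate in outline: you unfold $\langle\mathcal C_k^{Hil}(\cdot;s),f\rangle$ directly, Rankin--Selberg style, and your archimedean computations (including the $(\mathcal O_+^\times)^2$ versus $\mathcal O_+^\times$ index-two bookkeeping) are correct. The paper never unfolds against $f$: it applies the multi-variable Lipschitz formula (Lemma 3.1, Poisson summation on $\mathfrak d^{-1}$) to the inner sum over $A\backslash\Gamma_\infty^+$, recognizes the Poincar\'e series $P_k(z;\xi)$, and quotes the standard Petersson formula for their inner products. That route only rearranges an absolutely convergent series, so cuspidality and the spectral expansion come for free; your route additionally requires an integrable majorant for $\sum_\gamma|\cdot|\,|f|N(y)^k$ over the whole noncompact fundamental domain before Tonelli/Fubini applies, and the truncated estimate from part (1) does not supply it --- one must invoke the exponential decay of the cusp form $f$ near the cusp, which you never do. Finally, you assert rather than verify that the constants ``produce exactly $\Lambda(f,k-s)$'': carrying the bookkeeping through (in your approach or the paper's) yields $D^{1-k}\Lambda(f,k-s)$ with $c_{k,s,D}$ as stated, i.e.\ the factor $D^{(k-1)/2}$ in $c_{k,s,D}$ does not cancel; the same slip is hidden in the paper's last step, which is exactly why this ``assembly of Gamma-factors'' cannot be waved through. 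Part (3) follows from (1) and (2) as you say, identically to the paper.
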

\bigskip

 \medskip

Next we define the \emph{double Eisenstein series} as follows: 
for $s,w\in\mathbb C, z\in\mathbb H^2$ and even integer $k\geq 6$, 

\begin{eqnarray*}\label{Doublehilbert}
E_{s,k-s}^{Hil}(z;w)=\sum_{\gamma,\delta\in\Gamma_\infty^+\backslash\Gamma, c_{\gamma\delta^{-1}}\gg 0}(c_{\gamma\delta^{-1}})^{(w-1)\mathbf{1}}\left(\frac{j(\gamma,z)}{j(\delta,z)}\right)^{-s\mathbf{1}}j(\delta,z)^{-k\mathbf{1}},
\end{eqnarray*}
and a completed double Eisenstein series by
$$E^{*, Hil}_{s,k-s}(z;w)=2 \alpha_{k,s,w,D}\cdot  E_{s,k-s}^{Hil}(z;w) $$ 
  with 
\begin{eqnarray*}\label{con}
 \alpha_{k,s,w,D}&:=&
 D^{k-w} \zeta_F(1-w+s)\zeta_F(1-w+k-s) \nonumber \\
&&\times
\biggr( e^{\frac{is\pi}{2}}  (2\pi)^{ w-k-1} 2^{k-2}  \frac{\Gamma(s) \Gamma(k-s) \Gamma(k-w) }{\Gamma(k-1) }\biggl)^2.
 \end{eqnarray*}

Then we have the following:
\begin{thm}\label{main1}({\bf double Eisenstein series})  Let $k\geq 6$ be even. 
\begin{enumerate}
 
\item
$E_{s,k-s}^{Hil}(z;w)$ converges absolutely and uniformly on compact subsets in the region $\mathcal R$ of points $(z,(s,w))$ in $\mathbb H^2\times\mathbb C^2$ subject to
\[
2<\re(s)<k-2,\, \re(w)<\min\{\re(s)- 1, k- 1-\re(s)\}.\]

\item  
$E^{*, Hil}_{s,k-s}(z;w)$ has an analytic continuation to all $s,w\in \mathbb{C}$ and 
  is a Hilbert cusp form of weight $k$ on $\Gamma$
as a function in $z.$ 

\item  $$E^{*, Hil}_{s,k-s}(\cdot ; w)=\sum_{f\in \mathcal{H}_k}\frac{\Lambda(f,s)\Lambda(f,w)}{\langle f, f\rangle} f,$$ where $\mathcal{H}_k$ is   the   set of primitive forms of   weight $k.$

\item For $f\in \mathcal{H}_k$,  $\langle E^{*, Hil}_{s,k-s}(\cdot ; w), \, f\rangle=\Lambda(f,s)\Lambda(f,w), \text{ for all } s, w\in \mathbb{C}.$

\item $E^{*, Hil}_{s,k-s}(z;w)$ satisfies functional equations:
\[E^{*, Hil}_{s,k-s}(z;w)=E^{*, Hil}_{w,k-w}(z;s),\quad E^{*, Hil}_{k-s,s}(z;w)=E^{*, Hil}_{s,k-s}(z;w).\]
\end{enumerate}
\end{thm}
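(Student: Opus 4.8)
The plan is to establish the single inner-product identity (4) for $(z,(s,w))$ in the region $\mathcal R$ of part (1), and then to obtain (2), (3) and (5) as formal consequences. Part (1) itself is a direct majorization: writing the summand as $N(c_{\gamma\delta^{-1}})^{w-1}j(\gamma,z)^{-s\mathbf 1}j(\delta,z)^{-(k-s)\mathbf 1}$ and using $c_{\gamma\delta^{-1}}=c_\gamma d_\delta-d_\gamma c_\delta$ together with the elementary bound relating $|N(c_\eta z+d_\eta)|$ to the imaginary parts of $z$, one dominates $|E_{s,k-s}^{Hil}|$ by a product of two Hilbert--Eisenstein majorants $\sum|N(cz+d)|^{-t}$, each convergent for $t>2$. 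The conditions $2<\re(s)<k-2$ and $\re(w)<\min\{\re(s)-1,k-1-\re(s)\}$ are precisely what force both factors, together with the coupling exponent $\re(w)-1$, to converge; local uniformity in $z$ is built into the same estimates.

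For (4) the first step is to recognize $E_{s,k-s}^{Hil}$ as a Poincar\'e series. Substituting $\eta=\gamma\delta^{-1}$ and using the cocycle relation $j(\eta\delta,z)=j(\eta,\delta z)j(\delta,z)$ gives $E_{s,k-s}^{Hil}(z;w)=\sum_{\delta\in\Gamma_\infty^+\backslash\Gamma}(\Phi_s(\cdot\,;w)\,|_k\,\delta)(z)$, where the seed
\[
\Phi_s(z;w)=\sum_{\substack{\eta\in\Gamma_\infty^+\backslash\Gamma\\ c_\eta\gg0}}N(c_\eta)^{w-1}j(\eta,z)^{-s\mathbf 1}
\]
is $\Gamma_\infty^+$-invariant under $|_k$ because $N(\varepsilon)=1$ for every totally positive unit $\varepsilon$. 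The standard unfolding of a Poincar\'e series against a cusp form then yields
\[
\langle E_{s,k-s}^{Hil}(\cdot\,;w),f\rangle=\int_{\Gamma_\infty^+\backslash\mathbb{H}^2}\Phi_s(z;w)\,\overline{f(z)}\,N(y)^k\,d\mu .
\]

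The heart of the proof is the evaluation of this integral, and tracking its exact normalization is the main obstacle. Parametrizing the cosets by coprime bottom rows $(c,d)$ with $c\gg0$ modulo totally positive units, one has $\Phi_s(z;w)=\sum_{(c,d)}N(c)^{w-1}N(cz+d)^{-s}$. I would first remove the coprimality condition by pulling out the totally positive gcd; since the narrow class number is $1$ this gcd runs over totally positive integers modulo units and contributes exactly the factor $\zeta_F(1-w+s)$. Next I would unfold the $x$-integral over a fundamental parallelogram of $\mathcal O$ against the translations $d\mapsto d+c\mu$, turning $\int_{\mathbb{R}^2/\mathcal O}\sum_{d}$ into $\sum_{d_0\bmod c}\int_{\mathbb{R}^2}$; inserting the Fourier expansion $f(z)=\sum_{\nu}a_f(\nu)e^{2\pi i\tr(\nu z)}$ makes the residue sum over $d_0$ collapse to a Ramanujan-type sum that forces the divisibility $\nu\in c\,\mathfrak d^{-1}$, after which the $x$- and $y$-integrals evaluate to Gamma-factors through a beta/Mellin integral. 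The $y$-Mellin transform of the Fourier coefficients assembles the completed $\Lambda(f,s)$, while the remaining double Dirichlet series in the ideals $(c)$ and $(\nu)\mathfrak d$, twisted by $N(c)^{w-1}$, factors through the Hecke--Euler identity $\sum_{\mathfrak c,\mathfrak m}a_f(\mathfrak c\mathfrak m)N(\mathfrak c)^{-A}N(\mathfrak m)^{-B}=L(f,A)L(f,B)/\zeta_F(A+B-k+1)$ into a product of two $L$-values divided by the second zeta factor $\zeta_F(1-w+k-s)$. The constant $\alpha_{k,s,w,D}$ is defined exactly to cancel both $\zeta_F$-factors and to complete the two $L$-values, producing $\langle E^{*, Hil}_{s,k-s}(\cdot\,;w),f\rangle=\Lambda(f,s)\Lambda(f,w)$ as in (4).

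With (4) in hand on $\mathcal R$, the rest is formal. One checks that $E^{*, Hil}_{s,k-s}(\cdot\,;w)$ lies in $S_k(\Gamma)$: holomorphy and weight-$k$ invariance are immediate from part (1) and the Poincar\'e description, and cuspidality follows by verifying that its constant Fourier coefficient vanishes (equivalently, that it is orthogonal to the Eisenstein subspace). Expanding in the orthogonal Hecke basis $\mathcal H_k$ then gives $E^{*, Hil}_{s,k-s}(\cdot\,;w)=\sum_{f\in\mathcal H_k}\langle E^{*, Hil}_{s,k-s}(\cdot\,;w),f\rangle\,\langle f,f\rangle^{-1}f=\sum_{f}\frac{\Lambda(f,s)\Lambda(f,w)}{\langle f,f\rangle}f$, which is (3). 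Because each $\Lambda(f,\cdot)$ is entire and $\mathcal H_k$ is finite, this finite sum is holomorphic in $(s,w)\in\mathbb C^2$; this furnishes the analytic continuation of (2) and extends (3) and (4) to all $s,w$. Finally (5) is read off from (3): the symmetry of $\Lambda(f,s)\Lambda(f,w)$ gives $E^{*, Hil}_{s,k-s}(z;w)=E^{*, Hil}_{w,k-w}(z;s)$, and the functional equation $\Lambda(f,k-s)=(-1)^k\Lambda(f,s)=\Lambda(f,s)$ (valid since $k$ is even) gives $E^{*, Hil}_{k-s,s}(z;w)=E^{*, Hil}_{s,k-s}(z;w)$.
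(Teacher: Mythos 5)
Your strategy is sound and, where I have checked the bookkeeping (the Poincar\'e-series seed $\Phi_s(z;w)=\sum_{c_\eta\gg 0}N(c_\eta)^{w-1}j(\eta,z)^{-s\mathbf{1}}$, the gcd factor $\zeta_F(1-w+s)$, the Hecke--Euler factorization with exponents $A=k-w$, $B=k-s$ producing $L(f,k-s)L(f,k-w)/\zeta_F(1-w+k-s)$), it is consistent with the normalization $\alpha_{k,s,w,D}$ --- but your route to the central identity is genuinely different from the paper's. You prove (4) first, by directly unfolding $E^{Hil}_{s,k-s}(\cdot;w)$ against a primitive form and evaluating the archimedean integrals (a classical Rankin--Selberg computation), and then deduce (3) by expanding in the Hecke basis. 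The paper never unfolds the double Eisenstein series against $f$: it proves the purely algebraic identity of Lemma \ref{connection}, namely $\zeta_F(1-w+s)\zeta_F(1-w+k-s)E_{s,k-s}^{Hil}(z;w)=2c_{k,s,D}^2\sum_{\mathfrak n}N(\mathfrak n)^{w-k}T_{\mathfrak n}\bigl(\mathcal C_k^{Hil}(z;s)\bigr)$, by rearranging the double sum into cosets of $M_{\mathfrak n}$, and then invokes the spectral expansion of the Cohen kernel (Theorem \ref{cohen-analytic}, already proved via the Lipschitz summation formula and Poincar\'e series) together with self-adjointness of $T_{\mathfrak n}$; thus (3) comes first and (4) is a corollary. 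What the paper's route buys is that all archimedean integrations and branch-factor bookkeeping ($e^{\pi i s/2}$, Gamma factors, the unit action on $y$) are done once, inside Theorem \ref{cohen-analytic}; your route is self-contained but the ``exact normalization'' you flag really is the hard part, since over a real quadratic field the residue sums modulo $c$, the Mellin transforms over $\mathbb{R}_+^2$ modulo units, and the contour integrals must all be reproduced to land exactly on $\alpha_{k,s,w,D}$. Your part (1) is essentially the paper's argument (the bound $N(c_{\gamma\delta^{-1}})\leq N(\im(\gamma z))^{-1/2}N(\im(\delta z))^{-1/2}$ and a product of two convergent Eisenstein majorants).

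The one genuine gap is cuspidality, and in your logical order it is load-bearing. To pass from (4) to (3) you must expand $E^{*,Hil}_{s,k-s}(\cdot;w)$ in the basis $\mathcal H_k$, which is legitimate only after you know it lies in $S_k(\Gamma)$: a holomorphic weight-$k$ Hilbert modular form can have an Eisenstein component, and that component is invisible to the inner products $\langle\cdot,f\rangle$ with cusp forms, so (4) alone does not determine the form and cannot yield (3). You assert that ``the constant Fourier coefficient vanishes'' but give no argument, and this is not a formality. Although no single term of the double sum is constant (the condition $c_{\gamma\delta^{-1}}\gg 0$ excludes the pair of identity cosets) and each term decays as $N(y)\to\infty$, one must interchange this limit with the infinite sum, and your part (1) gives uniform convergence only on compact subsets of $\mathbb H^2$, which exclude neighborhoods of the cusp. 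The paper's proof of part (2) supplies exactly the missing estimate: it majorizes the series by $N(y)^{-k/2}$ times $E\bigl(z,\tfrac{\re(s)-r+1}{2}\bigr)E\bigl(z,\tfrac{k-\re(s)-r+1}{2}\bigr)-E\bigl(z,\tfrac{k-2r+2}{2}\bigr)$ with real-analytic Eisenstein series, and notes that the leading powers $N(y)^{\frac{k}{2}-r+1}$ cancel in the difference, leaving $o(N(y)^{k/2})$, whence $E^{Hil}_{s,k-s}(z;w)\to 0$ at the (unique) cusp. Insert this estimate (or an equivalent one) before your spectral expansion step, and your proof closes.
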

\medskip
 
 The following gives a relation between Rankin-Cohen brackets and a double Eisenstein series.
Rankin-Cohen brackets on spaces of Hilbert modular forms have been studied in \cite{CO}. Let us recall the definition of Rankin-Cohen brackets:
 for each $j=1,2,$ let $f_j:\mathbb{H}^2\rightarrow \mathbb{C}$ be holomorphic, 
  $k_j\in \mathbb{N}$  and $\ell=(\ell_1, \ell_2), \nu=(\nu_1,\nu_2)\in \mathbb{Z}_{\geq0}^2.$
Define the $\mathbb{\nu}$-th Rankin-Cohen bracket 
$$[f_1,f_2]^{Hil}_{\nu}=\sum_{\begin{array}{cc}0\leq \ell_j\leq \nu_j, j=1,2\end{array}
}(-1)^{\ell_1+\ell_2}\sm  k_1\mathbf{1}+\nu-\mathbf{1} \\ \nu-\ell\esm \sm   k_2\mathbf{1}+\nu-\mathbf{1}  \\ \ell \esm 
f_1^{(\ell)}f_2^{(\nu-\ell)}.$$
Here $f^{(\ell)}(z)=
(\frac{\partial^{\ell_1+\ell_2}}{
{\partial z_1}^{\ell_1}{\partial z_{2}}^{\ell_2}
}
f)(z)$ and
$ \sm   k\mathbf{1}+\nu-\mathbf{1} \\ \nu-\ell\esm = \sm  {k} +\nu_1-1\\ \nu_1-\ell\esm \sm  {k} +\nu_2-1\\ \nu_2-\ell\esm$.
\medskip

In the following, we only need parallel $\nu$, that is $\nu_1=\nu_2$.
\begin{thm}\label{RC} ({\bf  Rankin-Cohen brackets and a double Eisenstein series})
For $\nu\in \mathbb{Z}_{\geq 0}$ and $ k_j\in 2\mathbb{N}, j=1,2$, we have 
$$\left(\frac{\Gamma( {k_1}) \Gamma(\nu+1) }{\Gamma( {k_1+\nu})}\right)^2
[E_{ {k_1}}, E_{ {k_2}}]^{Hil}_{(\nu, \nu)}
=   4 \left(\frac{\Gamma( {k_2+\nu})  }{\Gamma( {k_2})}\right)^{ 2 } E^{ Hil}_{ {k_1}+\nu,  {k_2}+\nu}(z; \nu+1), $$ where
$E_{ {k }}(z) $ is the usual Hilbert Eisenstein series of weight $k$ on $\Gamma$  defined by 
$$E_{ {k }}(z) :=\sum_{\gamma\in \Gamma_{\infty}^+ \backslash \Gamma} j(\gamma, z)^{-k\mathbf{1}}.$$
\end{thm}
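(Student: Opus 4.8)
The plan is to expand both sides as sums indexed by pairs of cosets in $(\Gamma_\infty^+\backslash\Gamma)^2$ and match them termwise, up to a global sign-sector symmetrization. The right-hand side is immediate: specializing the double Eisenstein series at $s=k_1+\nu$, $k=k_1+k_2+2\nu$ and $w=\nu+1$, the multi-index factor $(c_{\gamma\delta^{-1}})^{(w-1)\mathbf1}$ becomes $N(c_{\gamma\delta^{-1}})^\nu$ and the automorphic factors collapse to
\[E^{Hil}_{k_1+\nu,k_2+\nu}(z;\nu+1)=\sum_{\substack{\gamma,\delta\in\Gamma_\infty^+\backslash\Gamma\\ c_{\gamma\delta^{-1}}\gg0}}N(c_{\gamma\delta^{-1}})^\nu\,j(\gamma,z)^{-(k_1+\nu)\mathbf1}j(\delta,z)^{-(k_2+\nu)\mathbf1}.\]
The real work is to bring the Rankin--Cohen bracket into the same shape. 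First I would differentiate the Eisenstein series termwise: since $\partial_{z_1}^{\ell_1}j(\gamma,z_1)^{-k}=(-1)^{\ell_1}\tfrac{\Gamma(k+\ell_1)}{\Gamma(k)}c_\gamma^{\ell_1}j(\gamma,z_1)^{-k-\ell_1}$ and similarly in $z_2$ with primed data, each $E_{k_j}^{(\ell)}$ is an absolutely convergent sum over a single coset variable with explicit Gamma-factors and powers of $c_\gamma,c_\gamma'$. Substituting $f_1=E_{k_1}$, $f_2=E_{k_2}$ into $[\,\cdot\,,\cdot\,]^{Hil}_{(\nu,\nu)}$ produces a double sum over representatives $\delta$ (from $E_{k_1}$) and $\gamma$ (from $E_{k_2}$); because the weights and $\nu$ are parallel, the inner sum over $\ell=(\ell_1,\ell_2)$ factors as a product of two identical one-variable sums, one in $z_1$ and one in $z_2$.

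I then collapse each one-variable sum. Using $\binom{k_1+\nu-1}{\nu-\ell}\tfrac{\Gamma(k_1+\ell)}{\Gamma(k_1)}=\tfrac{\Gamma(k_1+\nu)}{\Gamma(\nu-\ell+1)\Gamma(k_1)}$ and the mirror identity for the $k_2$-factor, the Gamma-factors combine into $\tfrac{\Gamma(k_1+\nu)\Gamma(k_2+\nu)}{\Gamma(k_1)\Gamma(k_2)\Gamma(\nu+1)}\binom{\nu}{\ell}$, so the $\ell$-sum becomes a binomial expansion $\sum_\ell(-1)^\ell\binom{\nu}{\ell}x^\ell=(1-x)^\nu$ with $x=\tfrac{c_\delta\,j(\gamma,z_1)}{c_\gamma\,j(\delta,z_1)}$. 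The elementary determinant identity
\[c_\gamma\,j(\delta,z_1)-c_\delta\,j(\gamma,z_1)=c_\gamma d_\delta-c_\delta d_\gamma=c_{\gamma\delta^{-1}}\]
turns $(1-x)^\nu$ into $c_{\gamma\delta^{-1}}^{\nu}\,(c_\gamma\,j(\delta,z_1))^{-\nu}$; the stray $c_\gamma^{\nu}$ cancels and the surviving automorphic factors are precisely $j(\delta,z_1)^{-(k_1+\nu)}j(\gamma,z_1)^{-(k_2+\nu)}$. Multiplying the two components and using $N(c_{\gamma\delta^{-1}})=c_{\gamma\delta^{-1}}c_{\gamma\delta^{-1}}'$, I arrive at
\[[E_{k_1},E_{k_2}]^{Hil}_{(\nu,\nu)}=\Big(\tfrac{\Gamma(k_1+\nu)\Gamma(k_2+\nu)}{\Gamma(k_1)\Gamma(k_2)\Gamma(\nu+1)}\Big)^2\sum_{c_{\gamma\delta^{-1}}\neq0}N(c_{\gamma\delta^{-1}})^\nu\,j(\delta,z)^{-(k_1+\nu)\mathbf1}j(\gamma,z)^{-(k_2+\nu)\mathbf1},\]
the sum running over all of $(\Gamma_\infty^+\backslash\Gamma)^2$ (terms with $c_{\gamma\delta^{-1}}=0$ vanish as $\nu\ge0$). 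Matching the explicit Gamma-prefactor against $\big(\Gamma(k_1)\Gamma(\nu+1)/\Gamma(k_1+\nu)\big)^2$ on the left of the theorem, the claim reduces to the single assertion $\Sigma=4\,E^{Hil}_{k_1+\nu,k_2+\nu}(z;\nu+1)$, where $\Sigma$ is the unrestricted double sum just displayed.

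The main obstacle is exactly this factor of $4$: the Rankin--Cohen side sums over all nonzero $c_{\gamma\delta^{-1}}$, whereas the double Eisenstein series retains only the totally positive sector $c_{\gamma\delta^{-1}}\gg0$. I would dispose of it by a sign-sector symmetrization on the variable $\gamma$ (which carries weight $k_2+\nu$ in $\Sigma$). The summand of $\Sigma$ is invariant under $\gamma\mapsto-\gamma$, since $j(-\gamma,z)^{-(k_2+\nu)\mathbf1}=(-1)^{-2(k_2+\nu)}j(\gamma,z)^{-(k_2+\nu)\mathbf1}=j(\gamma,z)^{-(k_2+\nu)\mathbf1}$ while $N(c)^\nu$ is unchanged; this involution exchanges the sectors $(+,+)\leftrightarrow(-,-)$ and $(+,-)\leftrightarrow(-,+)$ of $(c_{\gamma\delta^{-1}},c_{\gamma\delta^{-1}}')$. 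For the remaining exchange $(+,+)\leftrightarrow(+,-)$ I would use left multiplication $\gamma\mapsto\mathrm{diag}(\varepsilon,\varepsilon^{-1})\gamma$ by a unit $\varepsilon\in\mathcal O^\times$ of mixed sign, i.e. $N(\varepsilon)=-1$; such a unit exists precisely because $F$ has narrow class number one. Here $c_{\gamma\delta^{-1}}\mapsto\varepsilon^{-1}c_{\gamma\delta^{-1}}$, so $N(c)^\nu$ picks up $N(\varepsilon)^{-\nu}=(-1)^\nu$, while $j(\gamma,z)^{-(k_2+\nu)\mathbf1}$ picks up $N(\varepsilon)^{k_2+\nu}=(-1)^\nu$ (using that $k_2$ is even); the two signs cancel and the summand is again invariant. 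Since $-I$ and $\mathrm{diag}(\varepsilon,\varepsilon^{-1})$ have diagonals that are not totally positive, they lie outside $\Gamma_\infty^+$, so $\gamma$, $-\gamma$ and $\mathrm{diag}(\varepsilon,\varepsilon^{-1})\gamma$ represent distinct cosets; the two involutions therefore identify all four sign sectors, each equal to the $c\gg0$ sector. Relabelling $\gamma\leftrightarrow\delta$ (which sends $c_{\gamma\delta^{-1}}\mapsto-c_{\gamma\delta^{-1}}$ and puts the summand into the form above) then yields $\Sigma=4\,E^{Hil}_{k_1+\nu,k_2+\nu}(z;\nu+1)$. All rearrangements are legitimate on the region where the relevant series converge absolutely by Theorem~\ref{main1}(1), and the identity extends to the full stated range of $k_1,k_2,\nu$ by the analytic continuation of Theorem~\ref{main1}(2).
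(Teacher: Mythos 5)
Your proof is correct (for $\nu\geq 1$) and its endgame is the same as the paper's: both arguments reduce the bracket to the unrestricted double coset sum $\sum_{\gamma,\delta}N(c_{\gamma\delta^{-1}})^{\nu}\,j(\delta,z)^{-(k_1+\nu)\mathbf{1}}j(\gamma,z)^{-(k_2+\nu)\mathbf{1}}$ and then extract the factor $4$ by identifying the four sign sectors of $c_{\gamma\delta^{-1}}$ via $-I$ and $\mathrm{diag}(\varepsilon,\varepsilon^{-1})$ with $N(\varepsilon)=-1$ (the paper phrases this with the fundamental unit $\varepsilon_0$, whose norm is $-1$ precisely because the narrow class number is one --- the same fact you invoke). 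The genuine difference is how the intermediate identity is reached: the paper imports it from Proposition 1 and Lemma 1 of \cite{CO} (the bracket written as $\sum_{\delta}j(\delta,z)^{-k_1\mathbf{1}}E_{k_2}^{(\nu)}|_{k_2+2\nu}\delta$ together with the closed form of $E_{k_2}^{(\nu)}$), whereas you derive it from scratch by termwise differentiation, the factorization of the parallel-weight $\ell$-sum into two identical one-variable sums, the binomial collapse, and the determinant identity $c_\gamma j(\delta,z_1)-c_\delta j(\gamma,z_1)=c_{\gamma\delta^{-1}}$. Your Gamma-factor bookkeeping is right and reproduces the paper's constant. This self-containedness is a modest but real gain: it effectively reproves the needed special case of \cite{CO}, and it sidesteps a bookkeeping ambiguity in the paper, whose quotation of Proposition 1 sums $\delta$ over $\Gamma_\infty\backslash\Gamma$ but whose next display sums over $\Gamma_\infty^+\backslash\Gamma$ (these differ by the index $[\Gamma_\infty:\Gamma_\infty^+]=4$); your independent computation confirms that the $\Gamma_\infty^+$ version is the correct one.

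Two caveats. First, your parenthetical claim that the $c_{\gamma\delta^{-1}}=0$ terms vanish ``as $\nu\geq 0$'' fails at $\nu=0$: there $N(c_{\gamma\delta^{-1}})^{0}=1$, and for each $\delta$ there are exactly $[\Gamma_\infty:\Gamma_\infty^+]=4$ cosets $\gamma$ with $\gamma\delta^{-1}$ upper triangular, contributing $4E_{k_1+k_2}(z)$ in total; so at $\nu=0$ the identity needs this correction term. The paper's proof silently discards the same terms even though its statement allows $\nu=0$, so this is a shared gap rather than a defect of your argument relative to the paper's. Second, your closing appeal to ``analytic continuation in $k_1,k_2,\nu$'' via Theorem \ref{main1}(2) is misplaced: these are discrete parameters and nothing is continued in them. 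What you actually need --- and already have --- is absolute convergence of all series at the specialized point $(s,w)=(k_1+\nu,\nu+1)$, which lies in the region of Theorem \ref{main1}(1) exactly when $k_1,k_2\geq 4$; for $k_j=2$ the series $E_{2}$ itself diverges, so no continuation argument could rescue that case anyway.
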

  
\begin{rmk}
\begin{enumerate}
\item  
Cohen kernel (see \cite{C} and \cite{KZ}) is an elliptic cusp form $R_n$ of weight $2k$ on $\text{SL}_2(\mathbb{Z})$ characterized by, for each $  0\leq n\leq 2k-2,$
$$\langle f, \, R_n\rangle =n!(2\pi)^{-n-1} L(f, n+1), \text{ for all } f\in S_{2k}(\text{SL}_2(\mathbb{Z})).$$
  Diamantis and O'Sullivan in \cite{CD1} generalized  Cohen kernel $\mathcal{C}^{ell}_k(\tau, s) $   to get
$$\langle f, \, \mathcal{C}^{ell}_k(\tau, s) \rangle = \Gamma(s)(2\pi)^{s} L(f, s),
  s\in \mathbb{C}.$$

\item
Double Eisenstein series was introduced  and studied in  \cite{CD1, CD2} as a kernel yielding products of the periods of an elliptic Hecke eigenform at critical values as well as producing products of $L$-functions for Maass cusp forms.

\end{enumerate}
\end{rmk}
 
\medskip

In the following theorem, we recover Shimura's result on the algebraicity of critical values of $L(f, s)$ (Theorem 4.3 of \cite{Sh}).
For a primitive   form $f$ of even weight $k$, let $\mathbb{Q}(f)$ denote the number field generated by the Fourier coefficients of $f$ over $\mathbb{Q}$.

\begin{thm}\label{ra1}  ({\bf rationality}) 
Let $f$ be a primitive form of even weight $k\geq 6$ for $\Gamma$.  
Then there exist complex numbers $\omega_{\pm}(f) $ with $\langle f,\,f\rangle =\omega_{+}(f) \omega_{-}(f)$ such that for even $m$ and odd $\ell$ with $1\leq m,\ell\leq k-1$,

\begin{enumerate}
\item[(1)]
 \[\frac{\Lambda(f,m)}{w_{+}(f)}, \frac{\Lambda(f,\ell)}{w_{-}(f)}  \in \mathbb{Q}(f),\] 

\item[(2)]
 for each $\sigma\in\text{Gal}(\bar{\mathbb{Q}}/\mathbb{Q})$, 
\[\left(\frac{\Lambda(f,m)}{\omega_+(f)}\right)^\sigma=\frac{\Lambda(f^\sigma,m)}{\omega_+(f^\sigma)},\quad \left(\frac{\Lambda(f,\ell)}{\omega_-(f)}\right)^\sigma=\frac{\Lambda(f^\sigma,\ell)}{\omega_-(f^\sigma)}.\] 
\end{enumerate}
\end{thm}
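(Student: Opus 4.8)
The plan is to use the completed double Eisenstein series $E^{*,Hil}_{s,k-s}(z;w)$ as an arithmetic kernel, evaluated at the critical integers $s=m$ (even) and $w=\ell$ (odd). By Theorem \ref{main1}(3), for each such pair the cusp form $E^{*,Hil}_{m,k-m}(\cdot;\ell)$ has $n$-th Fourier coefficient $\sum_{f\in\mathcal H_k}\frac{\Lambda(f,m)\Lambda(f,\ell)}{\langle f,f\rangle}a_f(n)$. The whole argument rests on one arithmetic input: that these Fourier coefficients are \emph{rational} numbers. Granting this, the rationality and Galois statements follow by comparing the kernel's expansion with its Galois conjugates and by forming ratios.

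First I would establish the rationality of the kernel. By Theorem \ref{RC}, the special value $E^{Hil}_{m,k-m}(z;\ell)$ --- taking $\nu=\ell-1$, $k_1=m-\ell+1$ and $k_2=k-m-\ell+1$, all even and positive in the range $\ell<m$, $m+\ell<k$ --- is a nonzero rational multiple of the Rankin--Cohen bracket $[E_{k_1},E_{k_2}]^{Hil}_{(\nu,\nu)}$ of ordinary Hilbert Eisenstein series. Since $F$ has narrow class number one, the $E_k$ have rational Fourier coefficients (the constant term is a value $\zeta_F(1-k)\in\mathbb Q$ by Siegel, the remaining coefficients being rational divisor sums), and the Rankin--Cohen bracket is an integer-coefficient combination of products of their derivatives; the uniform powers of $2\pi$ introduced by differentiation are exactly those absorbed into the completion factor $\alpha_{k,s,w,D}$. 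Hence $E^{*,Hil}_{m,k-m}(\cdot;\ell)$ has Fourier coefficients in $\mathbb Q$ on this range, and the functional equations of Theorem \ref{main1}(5) together with $s\mapsto k-s$ (which preserves parity as $k$ is even) extend this to all admissible $(m,\ell)$.

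Next I would run the Galois descent. Writing $c_n\in\mathbb Q$ for the $n$-th coefficient of the kernel, the identity $c_n=\sum_f\frac{\Lambda(f,m)\Lambda(f,\ell)}{\langle f,f\rangle}a_f(n)$ holds for all $n$. For $\sigma\in\mathrm{Gal}(\bar{\mathbb Q}/\mathbb Q)$ one has $c_n^\sigma=c_n$ and $a_f(n)^\sigma=a_{f^\sigma}(n)$, while $\sigma$ permutes the eigenbasis $\mathcal H_k$. Applying $\sigma$ and using the linear independence of distinct Hecke eigenforms (distinct eigenvalue systems) to match the coefficient of each $f^\sigma$ gives $\big(\frac{\Lambda(f,m)\Lambda(f,\ell)}{\langle f,f\rangle}\big)^\sigma=\frac{\Lambda(f^\sigma,m)\Lambda(f^\sigma,\ell)}{\langle f^\sigma,f^\sigma\rangle}$; taking $\sigma$ fixing $\mathbb Q(f)$ shows each product $\frac{\Lambda(f,m)\Lambda(f,\ell)}{\langle f,f\rangle}$ lies in $\mathbb Q(f)$.

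Finally I would construct the periods and separate the two parities. Choosing a nonvanishing even value $\Lambda(f,m_0)$ and a nonvanishing odd value $\Lambda(f,\ell_0)$, set $\omega_+(f)=\Lambda(f,m_0)$ and $\omega_-(f)=\langle f,f\rangle/\omega_+(f)$, so that $\omega_+(f)\omega_-(f)=\langle f,f\rangle$. Then $\frac{\Lambda(f,\ell)}{\omega_-(f)}=\frac{\Lambda(f,\ell)\Lambda(f,m_0)}{\langle f,f\rangle}\in\mathbb Q(f)$ directly, while $\frac{\Lambda(f,m)}{\omega_+(f)}=\big(\frac{\Lambda(f,m)\Lambda(f,\ell_0)}{\langle f,f\rangle}\big)\big/\big(\frac{\Lambda(f,m_0)\Lambda(f,\ell_0)}{\langle f,f\rangle}\big)\in\mathbb Q(f)$, giving (1); choosing $m_0,\ell_0$ uniformly across a Galois orbit and feeding in the relation from the previous step yields the equivariance (2). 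The main obstacle is the rationality input of the second paragraph: pinning down that the transcendental normalization in $\alpha_{k,s,w,D}$ (powers of $\pi$, the $\zeta_F$- and $\Gamma$-factors) cancels against the Rankin--Cohen bracket to leave genuinely rational Fourier coefficients, and covering the boundary pairs $(m,\ell)$ where Theorem \ref{RC} does not apply directly; the nonvanishing needed to define the periods is a secondary point, handled by the known nonvanishing of some critical value of each parity.
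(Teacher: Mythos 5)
Your proposal has the same overall architecture as the paper's proof: (i) rationality of the Fourier coefficients of the kernel $E^{*, Hil}_{m,k-m}(\cdot;\ell)$ via Theorem \ref{RC}, the rationality of the coefficients of the Hilbert Eisenstein series (Siegel--Klingen), and the cancellation of the powers of $\pi$ in $\alpha_{k,m,\ell,D}$ against those produced by the $2(\ell-1)$ differentiations in the bracket, with the functional equations of Theorem \ref{main1}(5) covering the pairs $(m,\ell)$ not directly reached by Theorem \ref{RC}; (ii) periods given by an even critical value and its complementary factor --- the paper's $\omega_+(f)=\alpha_f\langle f,f\rangle/\Lambda(f,k-1)$ is literally $\Lambda(f,k-2)$, i.e.\ your $\omega_+(f)=\Lambda(f,m_0)$ with $m_0=k-2$, and $\omega_-(f)=\langle f,f\rangle/\omega_+(f)$; (iii) statements (1) and (2) by forming ratios. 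Where you genuinely diverge is the middle step: the paper converts ``the kernel has rational Fourier coefficients'' into ``$\Lambda(f,m)\Lambda(f,\ell)/\langle f,f\rangle\in\mathbb{Q}(f)$, Galois-equivariantly'' by citing Shimura's Proposition 4.15 and (4.16), whereas you do the descent by hand, expanding the kernel in the eigenbasis and applying $\sigma$ to the coefficient identity $c_n=\sum_f\lambda_f a_f(n)$.

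Your version of that step has a gap as written: you apply $\sigma\in\mathrm{Gal}(\bar{\mathbb{Q}}/\mathbb{Q})$ to the numbers $\lambda_f=\Lambda(f,m)\Lambda(f,\ell)/\langle f,f\rangle$ before knowing they are algebraic, and a field automorphism of $\bar{\mathbb{Q}}$ cannot act on a priori transcendental complex numbers. The gap is fixable by standard linear algebra: since distinct eigenforms have linearly independent coefficient sequences, choose $d=\#\mathcal{H}_k$ indices $n_1,\dots,n_d$ with $(a_{f_i}(n_j))_{i,j}$ invertible and solve the system, which shows each $\lambda_f$ lies in the compositum of the number fields $\mathbb{Q}(g)$, $g\in\mathcal{H}_k$; only then may one conjugate. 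Note also that your descent needs $\sigma$ to permute $\mathcal{H}_k$ with $a_{f^\sigma}(n)=a_f(n)^\sigma$, which for Hilbert modular forms is itself a theorem of Shimura --- so your route does not really avoid the input the paper cites, it just repackages it. Finally, your closing remark that nonvanishing is ``handled by the known nonvanishing of some critical value of each parity'' should be made concrete, and the natural choice is exactly the paper's: take $m_0=k-2$, $\ell_0=k-1$; then $\Lambda(f,k-1)\neq 0$ and $\Lambda(f,k-2)\neq 0$ for every primitive form (hence uniformly across a Galois orbit) by absolute convergence of the Euler product for $\mathrm{Re}(s)\geq k/2+1$ (Kim--Sarnak), which is where the hypothesis $k\geq 6$ enters.
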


\medskip
\begin{rmk}
\begin{enumerate}
\item
The above theorem is an analogous result of that for elliptic modular forms proved in \cite{KZ} (Theorem in page 202).  We can also extend the rationality easily to arbitrary L-values as did in Theorem 8.3 of \cite{CD2}.
\item The above theorem  is a special case of Shimura theorem (Theorem 4.3 in \cite {Sh}) by taking    $n=2$, $\psi=1$, and $k_1=k_2=k.$ 
\end{enumerate}
\end{rmk}

\section{Proofs}

We need the following multi variable Lipschitz summation formula.

\begin{lem}\label{lip} ({\bf multi-variable Lipschitz summation formula})
Assume that $\im (s)>2$. For $z\in \mathbb{H}^2$, 
\[\sum_{x\in\mathcal O}(z+x)^{-s\mathbf{1}}
=\frac{(2\pi)^{2s}}{e^{\pi is}\Gamma(s)^{2}D^{1/2}}\sum_{\xi\in\mathfrak d^{-1}_+}N(\xi)^{s-1}\exp(2\pi i\textrm{Tr}(\xi z)),\] 
\end{lem}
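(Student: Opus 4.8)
The plan is to recognize the left-hand side as the sum of a factorizing function over the Minkowski embedding of $\mathcal{O}$ and to evaluate it by Poisson summation. Writing $z=(z_1,z_2)\in\mathbb{H}^2$, embed $\mathcal O$ as the full-rank lattice $\Lambda=\{(x,x')\colon x\in\mathcal O\}\subset\mathbb R^2$, so that
\[
\sum_{x\in\mathcal O}(z+x)^{-s\mathbf{1}}=\sum_{\lambda\in\Lambda}f(\lambda),\qquad f(u_1,u_2)=(z_1+u_1)^{-s}(z_2+u_2)^{-s},
\]
a product of two one-variable functions. In the range of $s$ forcing absolute convergence (the stated hypothesis), $f$ and its Fourier transform decay fast enough to apply Poisson summation,
\[
\sum_{\lambda\in\Lambda}f(\lambda)=\frac{1}{\operatorname{covol}(\Lambda)}\sum_{\mu\in\Lambda^{*}}\hat f(\mu).
\]
First I would record two lattice facts: $\operatorname{covol}(\Lambda)=D^{1/2}$ (the determinant of the embedding of an integral basis), and, under the standard inner product on $\mathbb R^2$, $\Lambda^{*}$ is precisely the Minkowski embedding of the codifferent, $\Lambda^{*}=\{(\xi,\xi')\colon\xi\in\mathfrak d^{-1}\}$, since $\langle(x,x'),(\xi,\xi')\rangle=\tr(x\xi)\in\mathbb Z$ for all $x\in\mathcal O$ is exactly the defining condition $\xi\in\mathfrak d^{-1}$.

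Next I would compute $\hat f$. Because $f$ factorizes, so does its transform, $\hat f(\mu_1,\mu_2)=\hat g(\mu_1)\,\hat g(\mu_2)$ with $\hat g(\mu)=\int_{\mathbb R}(z_j+u)^{-s}e^{-2\pi i u\mu}\,du$. The one-variable transform is the classical ingredient behind the Lipschitz formula: starting from the Gamma integral $(z_j+u)^{-s}=\frac{(-2\pi i)^{s}}{\Gamma(s)}\int_0^{\infty}t^{s-1}e^{2\pi i t(z_j+u)}\,dt$, valid since $\im(z_j+u)=\im z_j>0$, one reads off
\[
\int_{\mathbb R}(z_j+u)^{-s}e^{-2\pi i u\mu}\,du=\frac{(-2\pi i)^{s}}{\Gamma(s)}\,\mu^{s-1}e^{2\pi i\mu z_j}\quad(\mu>0),
\]
the transform vanishing for $\mu\le 0$. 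This one-sided support is what collapses the dual sum to totally positive frequencies: $\hat f(\xi,\xi')\neq 0$ forces both $\xi>0$ and $\xi'>0$, i.e.\ $\xi\gg 0$, which is exactly the index set $\mathfrak d^{-1}_+$.

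Finally I would assemble the constants. Multiplying the two one-variable transforms gives $\hat f(\xi,\xi')=\frac{(-2\pi i)^{2s}}{\Gamma(s)^{2}}\,N(\xi)^{s-1}e^{2\pi i\tr(\xi z)}$, and using the principal branch $(-2\pi i)^{2s}=(2\pi)^{2s}e^{-\pi i s}$ together with $\operatorname{covol}(\Lambda)^{-1}=D^{-1/2}$ yields precisely the asserted right-hand side.

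The main obstacle is analytic bookkeeping rather than any conceptual difficulty. I must justify the interchange in Poisson summation by exhibiting absolute convergence of the $\Lambda$-sum and adequate decay of $\hat f$ on $\Lambda^{*}$ in the stated range of $s$; here the factor $e^{-2\pi\tr(\xi y)}$ (with $y=\im z$ and $\xi\gg 0$) makes the dual side converge for every $s$, so the genuine work lies in controlling the primal lattice sum. I must also track the branch of $(-2\pi i)^{s}$ carefully so that the phase $e^{-\pi i s}$ emerges with the correct sign. The identification $\Lambda^{*}=\mathfrak d^{-1}$ via the trace pairing and the restriction to $\mathfrak d^{-1}_+$ coming from the one-sided support of $\hat g$ are the two structural points that make the real-quadratic statement fall out of the classical one-variable computation.
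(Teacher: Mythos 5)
Your proposal is correct and follows essentially the same route as the paper: Poisson summation on the Minkowski-embedded lattice, the classical one-sided Gamma-integral computation of the Fourier transform pair $(z+u)^{-s}\leftrightarrow \mu^{s-1}e^{2\pi i\mu z}$ (which forces the dual sum onto totally positive $\xi$), the identification $\Lambda^{*}=\mathfrak d^{-1}$ via the trace pairing, and the covolume $\sqrt{D}$ producing the constant. The only cosmetic difference is orientation: the paper (following Knopp--Robins) takes $f$ to be the exponential-type function $N(x)^{s-1}e^{2\pi i\mathrm{Tr}(xz)}$ summed over $\mathfrak d^{-1}$ with $\hat f$ the rational function, whereas you start from the rational side over $\mathcal O$; by the symmetry of Poisson summation these are the same argument.
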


\begin{proof}  By the multi-index notation, 
\[\sum_{x\in\mathcal O}(z+x)^{-s\mathbf{1}}= \sum_{x\in\mathcal O}
N(z+x)^{-s}=\sum_{x\in\mathcal O}
(z_1+x)^{-s}(z_2+x')^{-s}.\] 
Following \cite{KR},   define   \[f(x)=N(x)^{s-1}\exp(2\pi i\tr(xz))\] for $x=(x_1,x_2)\gg 0$ and $0$ otherwise, so for $  \im (s)>2$ and $z\in\mathbb{H}^2$, $f$ is clearly $L^1$ on the quadratic space $V=\mathbb{R}^2$ with the trace form. The computation of \cite[Theorem 1]{KR} shows that the Fourier transform $\hat{f}(w)$ is given by
\[\hat{f}(w)=\frac{\Gamma(s)^2}{(-2\pi i)^{2s}}(z+w)^{-s\mathbf{1}}, \quad w\in\mathbb{R}^2.\] It is clear that for $x\in\mathbb{R}^2$, 
\[|f(x)|+|\hat f(x)|\ll (1+  ||x|| )^{-2-\delta}\] 
for any positive $\delta$,   where $||\cdot||$ is the Euclidean norm. Therefore, we may apply the Poisson summation formula (see page 252 of \cite{St}), and for  a general lattice $M$ in $V$ with integral dual lattice $M^\vee$, the Poisson summation formula reads \[\sum_{\alpha\in M}f(\alpha)=\sqrt{|M/M^{\vee}|}\sum_{\alpha\in M^{\vee}}\hat{f}(\alpha).\] Now set $M= \mathfrak{d}^{-1}$, then $M^\vee=\mathcal O$,   $|M/M^\vee|=D$    and the Lipschitz summation formula follows easily. 
\end{proof}
\medskip

\noindent Now we prove Theorem \ref{cohen-analytic} about Cohen kernel.

\noindent\emph{Proof of Theorem   \ref{cohen-analytic} }:
To show the convergence,  we follow the treatment of Section 1.15 in \cite{G}. 
Firstly, we prove the uniform absolute convergence on compact subsets, using the fact that $L^1$-convergence implies uniform convergence on compact subset for series of holomorphic functions (See Lemma on Page 52 of \cite{G}). It suffices to treat the case for $z$ in a small neighborhood $U$ such that $\overline U$ is compact,  $ N(\im z) > X^{-1}$  and $ N(\im\gamma z) < X$ for any $\gamma\in\Gamma$ and $z\in U$ for fixed big $X>0$. Note that this essentially picks a Siegel set where $\overline{U}$ lives.   In this case, we only have to prove that 
\[\int_{\Gamma\backslash\mathbb H^2_X}\sum_{\gamma\in A\backslash\Gamma}|N(j(\gamma,z))|^{-k}|\gamma z|^{-\sigma\mathbf{1}}d\mu(z)<\infty\]
where $\sigma=\re(s)$ and  $\mathbb{H}^2_X$ is the subset of $z$  with  $ N(\im z)<X$ in $\mathbb{H}^2$.  Here we denote $|z|=(|z_1|,|z_2|)$ and employ the multi-index notation.
The left-hand side is bounded by
\begin{align*}
\leq & X^{\frac{k}{2}}\int_{\Gamma\backslash\mathbb H^2_X}\sum_{\gamma\in A\backslash\Gamma}(\im \gamma z)^{\frac{k}{2}\mathbf{1}}|\gamma z|^{-\sigma\mathbf{1}}d\mu(z)\\
\ll & \sum_{\gamma\in A\backslash\Gamma}\int_{\Gamma\backslash\mathbb H^2_X}(\im \gamma z)^{\frac{k}{2}\mathbf{1}}|\gamma z|^{-\sigma\mathbf{1}}d\mu(z)\\
=& \int_{A\backslash\mathbb H^2_X}(\im z)^{\frac{k}{2}\mathbf{1}}|z|^{-\sigma\mathbf{1}}d\mu(z).
\end{align*}
The space $A\backslash \mathbb H^2_X$ can be viewed as a subspace of 
\[\{(z_1,z_2)\colon   N( \im z)  <X,  Y^{-1}\leq y_1/y_2\leq Y   \}\] for some positive $Y$ ($Y$ can be chosen as the smallest totally positive unit bigger than $1$). Moreover,
 that $N(\im (-1/z))<X$ implies $N(|z|^2) >X^{-1} N(\im z)$.  For $1<r<\sigma<k-1$, the last quantity is equal to
\begin{align*}
& \int_{A\backslash\mathbb H^2_X}( N( \im z) )^{\frac{k}{2}}|Nz|^{-r-(\sigma-r)}d\mu(z)\\
\ll & \int_{A\backslash\mathbb H^2_X}( N( \im z))^{\frac{k-\sigma+r}{2}}|Nz|^{-r}d\mu(z)\\
\ll & \int_{y_1y_2<X, Y^{-1}<y_1/y_2<Y}( N( \im z)  )^{\frac{k-\sigma+r}{2}}( N( \im z)  )^{1-r}\frac{dy_1dy_2}{(y_1y_2)^2} \\
= & \int_{y_1y_2<X, Y^{-1}<y_1/y_2<Y} ( N( \im z))^{\frac{k-\sigma-r}{2}-1}dy_1dy_2<\infty
\end{align*}
 where in the third line we applied Equation (5.8) of \cite{CD1} for the integration on $x$. This is part (1).

For part (2), first note that the absolutely uniformly convergence implies that ${\mathcal C}^{Hil}_k (z;s)$ converges to a Hilbert modular  form in the strip $2<\sigma<k-1$ since ${\mathcal C}^{Hil}_k (z;s)$ is $\Gamma$-invariant with a proper automorphic factor. Secondly, we write
\begin{eqnarray*}
&& 2c_{k,s,D}^2 \cdot {\mathcal C}^{Hil}_k (z;s) =\sum_{\alpha\in A\backslash \Gamma_\infty^+}\sum_{\gamma\in \Gamma_\infty^+\backslash\Gamma} j(\alpha\gamma,z)^{-k\mathbf{1}}(\alpha\gamma z)^{-s\mathbf{1}}\\
&&=\sum_{\gamma\in \Gamma_\infty^+\backslash\Gamma} j(\gamma,z)^{-k\mathbf{1}}\sum_{\alpha\in A\backslash \Gamma_\infty^+}(\alpha\gamma z)^{-s\mathbf{1}} 
 =\sum_{\gamma\in \Gamma_\infty^+\backslash\Gamma} j(\gamma,z)^{-k\mathbf{1}}\sum_{x\in\mathcal O}(\gamma z+x)^{-s\mathbf{1}}.
\end{eqnarray*}
 Applying the Lipschitz summation formula in Lemma \ref{lip} with $2<\sigma<k-1$, we have
\begin{align*}
2c_{k,s,D}^2\cdot {\mathcal C}^{Hil}_k(z;s)
&=\frac{(2\pi)^{2s}}{e^{\pi is}\Gamma(s)^{2}D^{1/2}}\sum_{\gamma\in \Gamma_\infty^+\backslash\Gamma} j(\gamma,z)^{-k\mathbf{1}}\sum_{\xi\in\mathfrak d^{-1}_+}(N\xi)^{s-1}\exp(2\pi i\textrm{Tr}(\xi\gamma z))\\
&=\frac{(2\pi)^{2s}}{e^{\pi is}\Gamma(s)^{2}D^{1/2}}\sum_{\xi\in\mathfrak d^{-1}_+}(N\xi)^{s-1}\sum_{\gamma\in \Gamma_\infty^+\backslash\Gamma} j(\gamma,z)^{-k\mathbf{1}}\exp(2\pi i\textrm{Tr}(\xi\gamma z))\\
&=\frac{(2\pi)^{2s}}{e^{\pi is}\Gamma(s)^{2}D^{1/2}}\sum_{\xi\in\mathfrak d^{-1}_+/(\mathcal O_+^\times)^2}(N\xi)^{s-1}\\
&\qquad\times\sum_{u\in\mathcal O^\times_+}\sum_{\gamma\in \Gamma_\infty^+\backslash\Gamma} j(\gamma,z)^{-k\mathbf{1}}\exp(2\pi i\textrm{Tr}(u^2\xi\gamma z))\\
&=\frac{(2\pi)^{2s}}{e^{\pi is}\Gamma(s)^{2}D^{1/2}}\sum_{\xi\in\mathfrak d^{-1}_+/(\mathcal O_+^\times)^2}(N\xi)^{s-1}\sum_{\gamma\in U\backslash\Gamma} j(\gamma,z)^{-k\mathbf{1}}\exp(2\pi i\textrm{Tr}(\xi\gamma z)),
\end{align*}
where $U$ is the subgroup of elements of the form $\begin{pmatrix}
1&x\\0&1
\end{pmatrix}$ in $\Gamma$.
On the other hand, recall
 the $\xi$-th Poincar\'e series \cite{G}
$$P_k(z;\xi)=\sum_{\gamma\in U\backslash\Gamma} 
j(\gamma,z)^{-k\mathbf{1}}\exp(2\pi i\textrm{Tr}(\xi\gamma z))$$
 and that it is a cusp form with
$$ P_k(z;\xi)=\frac{\Gamma(k-1)^2D^{1/2}}{(4\pi)^{2k-2}N(\xi)^{k-1}}\sum_{f\in\mathcal H_k}\frac{\bar{a}_f(\xi)f}{\langle f,f\rangle}.$$
We see that up to a constant factor (depending on $s$) ${\mathcal C}^{Hil}_k(z;s)$ is equal to 
\begin{align*}
&\sum_{\xi\in\mathfrak d^{-1}_+/(\mathcal O_+^\times)^2}(N\xi)^{s-1}(N\xi)^{1-k}\sum_{f\in\mathcal H_k}\frac{\bar{a}_f(\xi)f(z)}{\langle f,f\rangle}\\
=&\sum_{\xi\in\mathfrak d^{-1}_+/(\mathcal O_+^\times)^2}(N\xi)^{s-k}\sum_{f\in\mathcal H_k}\frac{\bar{a}_f(\xi)f(z)}{\langle f,f\rangle}\\
=&2\sum_{\xi\in\mathfrak d^{-1}_+/\mathcal O_+^\times}(N\xi)^{s-k}\sum_{f\in\mathcal H_k}\frac{\bar{a}_f(\xi)f(z)}{\langle f,f\rangle}\\
=&2D^{k-s}\sum_{f\in\mathcal H_k}\frac{f(z)}{\langle f,f\rangle}\sum_{\xi\in\mathfrak d^{-1}_+/\mathcal O_+^\times}(N\xi\mathfrak{d})^{s-k}\bar{a}_f(\xi)\\
=&2D^{k-s}\sum_{f\in\mathcal H_k}\frac{f(z)L(f,k-s)}{\langle f,f\rangle},
\end{align*} where we used the fact that $a_f(\xi)$ is real.
Putting everything together, we see that
\[2c_{k,s,D}^2\cdot {\mathcal C}^{Hil}_k (z;s)=\frac{2^{5-2k}  \pi ^2\Gamma(k-1)^2}{e^{\pi is}\Gamma(s)^2\Gamma(k-s)^2}\sum_{f\in\mathcal H_k} \frac{\Lambda(f,k-s)f(z)}{\langle f,f\rangle}\]
It follows that $ {\mathcal C}^{Hil}_k (z;s)$ is cuspidal on the region $2<\sigma<k-1$, and that
\begin{eqnarray*}\label{pro}
{\mathcal C}^{Hil}_k(z;s)=\sum_{f\in\mathcal H_k}\frac{\Lambda(f,k-s)f(z)}{\langle f,f\rangle}.
\end{eqnarray*}

For part (3):  The expression of $ {\mathcal C}^{Hil}_k (z;s)$ in part (2) gives the analytic continuation to $s\in\mathbb{C}$ and that for each $s\in\mathbb{C}$, $ {\mathcal C}^{Hil}_k(z;s)$ is a cusp form. This completes the proof. $\Box$

\bigskip

Next, to prove Theorem \ref{main1} we first need  to show a connection between Cohen kernel and double Eisenstein   series, which is obtained in the following lemma: 

\begin{lem}\label{connection}
On the region $\mathcal R$, we have
\[\zeta_F(1-w+s)\zeta_F(1-w+k-s)E_{s,k-s}^{Hil}(z;w)=2c_{k,s,D}^2  \sum_{\mathfrak n}N(\mathfrak n)^{w-k}T_{\mathfrak{n}} \bigr( {\mathcal C}^{Hil}_k (z;s)\bigl), \]
with  $T_{\mathfrak{n}}$  the $\mathfrak{n}$-th Hecke operator 
and $\zeta_F(s)$ the Dedekind zeta function for $F$ defined as
\[\zeta_F(s)=\sum_{\mathfrak a}N(\mathfrak a)^{-s}=\sum_{a\in\mathcal O_+/\mathcal{O}_+^\times}N(a)^{-s},\]
where $\mathfrak a$ runs through all integral nonzero ideals. 
 \end{lem}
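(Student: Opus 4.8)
The plan is to transform the right-hand side by unfolding the Hecke operators against the definition of the Cohen kernel, and to recognize the resulting sum as the double Eisenstein series inflated by two copies of $\zeta_F$. First I would substitute the definitions of $T_{\mathfrak n}$ and of $\mathcal C_k^{Hil}(z;s)$; the prefactor $2c_{k,s,D}^2$ cancels the $\frac{1}{2}c_{k,s,D}^{-2}$ built into the Cohen kernel. Writing $\mathcal C_k^{Hil}(\cdot;s)=\frac{1}{2}c_{k,s,D}^{-2}\sum_{\gamma\in A\backslash\Gamma}\phi_0|_k\gamma$ with $\phi_0(z)=N(z)^{-s}$, and using that $\det(\delta)\gg 0$ generates $\mathfrak n$ (narrow class number one), I would merge $\sum_{\mathfrak n}\sum_{\delta\in Z\Gamma\backslash M_{\mathfrak n}}$ into a single sum over $\delta\in Z\Gamma\backslash M$, where $M=\bigcup_{\mathfrak n}M_{\mathfrak n}$. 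The cocycle relation for $j$ then collapses the two automorphy factors, and after cancelling the powers of $N(\mathfrak n)$ one is left with
\[\text{RHS}=\sum_{\delta\in Z\Gamma\backslash M}\ \sum_{\gamma\in A\backslash\Gamma} F(\gamma\delta),\qquad F(\eta):=N(\det\eta)^{w-1}\,N(a_\eta z+b_\eta)^{-s}\,N(c_\eta z+d_\eta)^{-(k-s)},\]
where I have used $N(\eta z)^{-s}N(j(\eta,z))^{-k}=N(a_\eta z+b_\eta)^{-s}N(c_\eta z+d_\eta)^{-(k-s)}$.

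The decisive observation is that $F(\eta)$ depends on $\eta$ only through its two rows $r_1=(a_\eta,b_\eta)$ and $r_2=(c_\eta,d_\eta)$, carrying respectively the weight $s$ and the weight $k-s$, together with $\det(r_1;r_2)=\det\eta$. A direct check shows $F$ is invariant under left multiplication by $A$ and, since $k$ is even and $N(u)=\pm1$ for units $u$, also under the scalar group $Z$; hence $F$ descends to row-pairs taken modulo the independent action of the totally positive units on each row. The key combinatorial step is then to verify that, as $\gamma$ runs over $A\backslash\Gamma$ and $\delta$ over $Z\Gamma\backslash M$, the products $\gamma\delta$ enumerate every pair of integral rows with totally positive determinant exactly once modulo this $\mathcal O_+^\times\times\mathcal O_+^\times$-action. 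This is exactly where narrow class number one is indispensable: it guarantees totally positive generators for every ideal and the identity $\mathcal O_+^\times=(\mathcal O^\times)^2$, which is what reconciles the quotient by $A$ (anti-diagonal units) and by $Z\Gamma$ (scalar units) with the independent scaling of the two rows. Granting this, $\text{RHS}$ equals $\sum F(r_1,r_2)$ over pairs of nonzero integral rows with $\det(r_1;r_2)\gg 0$, modulo $\mathcal O_+^\times\times\mathcal O_+^\times$.

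On the other side I would rewrite the double Eisenstein series in the same row coordinates. Using $\Gamma_\infty^+\backslash\Gamma\ \leftrightarrow\ \{\text{primitive rows }(c,d)\}/\mathcal O_+^\times$ together with $j(\gamma,z)=N(c_\gamma z+d_\gamma)$ and $c_{\gamma\delta^{-1}}=c_\gamma d_\delta-c_\delta d_\gamma=\det(v_\gamma;v_\delta)$, the summand of $E_{s,k-s}^{Hil}(z;w)$ is precisely $F(v_\gamma,v_\delta)$, with the primitive bottom rows $v_\gamma,v_\delta$ playing the roles of $r_1,r_2$; thus $E_{s,k-s}^{Hil}$ is $\sum F$ restricted to primitive row-pairs modulo $\mathcal O_+^\times\times\mathcal O_+^\times$. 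Finally I would fatten primitive rows to arbitrary ones: writing a general row as (totally positive content)$\cdot$(primitive row), the content of the weight-$s$ row contributes $N(\sigma)^{w-1-s}$ and that of the weight-$(k-s)$ row contributes $N(\sigma)^{w-1-(k-s)}$ once the factors from $N(\det\eta)^{w-1}$ and from the two norms are collected. Summing the contents over $\mathcal O_+/\mathcal O_+^\times$ produces exactly $\zeta_F(1-w+s)$ and $\zeta_F(1-w+k-s)$, whence
\[\zeta_F(1-w+s)\zeta_F(1-w+k-s)\,E_{s,k-s}^{Hil}(z;w)=\sum_{(r_1,r_2)}F(r_1,r_2),\]
the very sum reached from the Hecke side.

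The hard part is the exact multiplicity-one matching of the second paragraph: passing from the pair of quotients $A\backslash\Gamma$ and $Z\Gamma\backslash M$ to the clean $\mathcal O_+^\times\times\mathcal O_+^\times$-orbits of row-pairs requires tracking the interplay of anti-diagonal and scalar units against the determinant, and it is precisely here that narrow class number one, through $\mathcal O_+^\times=(\mathcal O^\times)^2$, removes the would-be square obstructions. All interchanges of summation are legitimate on $\mathcal R$, where by Theorem \ref{main1}(1) the series $E_{s,k-s}^{Hil}$ converges absolutely and where $\re(1-w+s)>1$ and $\re(1-w+k-s)>1$, so that both Dedekind zeta series converge, the Cohen-kernel input converging by Theorem \ref{cohen-analytic}(1). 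Having established the identity on $\mathcal R$, it persists wherever both sides are defined by analytic continuation.
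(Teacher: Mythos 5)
Your proposal is correct and is essentially the paper's own argument run in reverse: the paper starts from the zeta-inflated double Eisenstein series, absorbs the zeta contents into the primitive bottom rows, identifies row-pairs modulo $\mathcal O_+^\times\times\mathcal O_+^\times$ with $\tilde A\backslash M_{\mathfrak n}$ (where $\tilde A$ is the group of diagonal matrices with totally positive unit entries), and then splits $\tilde A\backslash M_{\mathfrak n}$ via $\tilde A\backslash Z\Gamma\cong A\backslash\Gamma$ to recognize the Hecke-translated Cohen kernels --- which is precisely your ``key combinatorial step,'' including the same role of narrow class number one (it gives $\mathcal O_+^\times=(\mathcal O^\times)^2$, hence $\tilde A\subset Z\Gamma$). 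The paper asserts this bijection at the same level of detail you do, so your proposal matches its proof in both substance and rigor.
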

\begin{proof} 
On $\mathcal R$, the series expansions of the two $\zeta_F$-factors converge absolutely. Therefore, on $\mathcal R$, by sending $\gamma$ to $(c_
\gamma,d_\gamma)$, the left-hand side is equal to
\begin{eqnarray*} 
&& \zeta_F(1-w+s)\zeta_F(1-w+k-s)E_{s,k-s}^{Hil}(z;w)
 \nonumber\\
&=&
\sum_{u,\tilde{u}}N(u)^{w-1-s}N(\tilde{u})^{w+s-1-k}\sum_{(c,d),(\tilde{c},\tilde{d})} (c\tilde{d}-d\tilde{c})^{(w-1)\mathbf{1}}\left(\frac{cz+d}{\tilde{c}z+\tilde{d}}\right)^{-s\mathbf{1}}(\tilde{c}z+\tilde{d})^{-k\mathbf{1}},
\end{eqnarray*}
where $u,\tilde{u}\in\mathcal O_+^\times\backslash\mathcal O_+$ and $(c,d),(\tilde{c},\tilde{d})\in \mathcal{O}_+^\times\backslash\mathcal O^2$ such that $\mathcal Oc+\mathcal Od=\mathcal O\tilde{c}+\mathcal O\tilde{d}=\mathcal O$ and $c\tilde{d}-d\tilde{c}\gg 0$. Combining the two summations, we have
\begin{align*}
\sum_{\mathfrak a,\tilde{\mathfrak a}}\sum_{(c,d),(\tilde{c},\tilde{d})} (c\tilde{d}-d\tilde{c})^{(w-1)\mathbf{1}}\left(\frac{cz+d}{\tilde{c}z+\tilde{d}}\right)^{-s\mathbf{1}}(\tilde{c}z+\tilde{d})^{-k\mathbf{1}},
\end{align*}
where this time $\mathfrak a,\tilde{\mathfrak a}$ are over all nonzero integral ideals and the inner summation is over $(c,d),(\tilde{c},\tilde{d})\in \mathcal{O}_+^\times\backslash\mathcal O^2$ such that $\mathcal Oc+\mathcal Od=\mathfrak a$, $\mathcal O\tilde{c}+\mathcal O\tilde{d}=\tilde{\mathfrak a}$ and $c\tilde{d}-d\tilde{c}\gg 0$. Then we can remove the summation over $\mathfrak a,\tilde{\mathfrak a}$ and it equals to
\begin{align*}
&   \sum_{(c,d),(\tilde{c},\tilde{d})}  (c\tilde{d}-d\tilde{c})^{(w-1)\mathbf{1}}\left(\frac{cz+d}{\tilde{c}z+\tilde{d}}\right)^{-s\mathbf{1}}(\tilde{c}z+\tilde{d})^{-k\mathbf{1}}\\
=&\sum_{\mathfrak n}\sum_{(c,d),(\tilde{c},\tilde{d})} (c\tilde{d}-d\tilde{c})^{(w-1)\mathbf{1}}\left(\frac{cz+d}{\tilde{c}z+\tilde{d}}\right)^{-s\mathbf{1}}(\tilde{c}z+\tilde{d})^{-k\mathbf{1}},
\end{align*}
where $\mathfrak n$ is over all nonzero integral ideals and the inner summation is over over $(c,d),(\tilde{c},\tilde{d})\in \mathcal{O}_+^\times\backslash\mathcal O^2$ such that $c\tilde{d}-d\tilde{c}\gg 0$ and $(c\tilde{d}-d\tilde{c})=\mathfrak n$.   Note that the two summations over $(c,d),(\tilde{c},\tilde{d})$ in the preceding equation have different ranges. 

Let $\tilde{A}$ denote the group of diagonal $2\times 2$ matrices with entries in $\mathcal O_+^\times$, so clearly $\tilde A\subset Z\Gamma$ and $\tilde A\backslash Z\Gamma\cong A\backslash\Gamma$.  Note that the inner summation set is mapped bijectively to $\tilde A\backslash M_\mathfrak{n}$ via
\[((c,d),(\tilde{c},\tilde{d}))\mapsto \begin{pmatrix}
c&d\\\tilde{c}&\tilde{d}
\end{pmatrix}.\]
Therefore, above expression is equal to
\begin{align*}
&\sum_{\mathfrak n}\sum_{\gamma\in \tilde A\backslash M_\mathfrak{n}} (\det(\gamma))^{(w-1)\mathbf{1}}(\gamma z)^{-s}j(\gamma,z)^{-k\mathbf{1}}\\
=& \sum_{\mathfrak n}\sum_{\gamma\in Z\Gamma\backslash M_\mathfrak{n}}\sum_{\beta\in \tilde A\backslash Z\Gamma} (\det(\beta\gamma))^{(w-1)\mathbf{1}}(\beta\gamma z)^{-s\mathbf{1}}j(\beta\gamma,z)^{-k\mathbf{1}}\\
=& \sum_{\mathfrak n}\sum_{\gamma\in Z\Gamma\backslash M_\mathfrak{n}}\sum_{\beta\in A\backslash \Gamma} (\det(\gamma))^{(w-1)\mathbf{1}}(\beta\gamma z)^{-s\mathbf{1}}j(\beta\gamma,z)^{-k\mathbf{1}}\\
=& 2c_{k,s,D}^2\cdot \sum_{\mathfrak n}N(\mathfrak{n})^{-\frac{k}{2}+w-1 }\sum_{\gamma\in Z\Gamma\backslash M_\mathfrak{n}} {\mathcal C}^{Hil}_k (z;s)|_k\gamma\\
=& 2c_{k,s,D}^2\cdot  \sum_{\mathfrak n}N(\mathfrak{n})^{  w-k  }T_\mathfrak{n}\bigl( {\mathcal C}^{Hil}_k (z;s)\bigr),
\end{align*}
which is the right-hand side.
\end{proof}
   
Using the preceding lemma we prove the following main theorem:

\noindent\emph{Proof of  Theorem \ref{main1}} :
For part (1),   apply the proof of Lemma 4.1 in \cite{CD2} for each component  and we have 
$$N(c_{\gamma\delta^{-1}})\leq N(\im(\gamma z))^{-1/2}N(\im(\delta z))^{-1/2},$$
for any $\gamma,\delta\in\Gamma$ with $c_{\gamma\delta^{-1}}\gg 0$.
Let $r=\max\{\re(w),1\}$. Since $[\Gamma_\infty:\Gamma_\infty^+]$ is finite, $E_{s,k-s}(z;w)$ is absolutely bounded up to a constant by
\begin{align*}
&\sum_{\gamma,\delta\in\Gamma_\infty^+\backslash\Gamma, c_{\gamma\delta^{-1}}\gg 0}(Nc_{\gamma\delta^{-1}})^{\re(w)-1}|Nj(\gamma,z)|^{-\re(s)}|Nj(\delta,z)|^{\re(s)-k}\\
\leq &\sum_{\gamma,\delta\in\Gamma_\infty^+\backslash\Gamma, c_{\gamma\delta^{-1}}\gg 0}N(\im(\gamma z))^{\frac{1-r}{2}}N(\im(\delta z))^{\frac{1-r}{2}}|Nj(\gamma,z)|^{-\re(s)}|Nj(\delta,z)|^{\re(s)-k}\\
\leq & \sum_{\gamma,\delta\in\Gamma_\infty^+\backslash\Gamma, c_{\gamma\delta^{-1}}\neq 0}N(\im(\gamma z))^{\frac{1-r}{2}}N(\im(\delta z))^{\frac{1-r}{2}}|Nj(\gamma,z)|^{-\re(s)}|Nj(\delta,z)|^{\re(s)-k}\\ 
\ll & \sum_{\gamma,\delta\in\Gamma_\infty\backslash\Gamma, c_{\gamma\delta^{-1}}\neq 0}N(\im(\gamma z))^{\frac{1-r}{2}}N(\im(\delta z))^{\frac{1-r}{2}}|Nj(\gamma,z)|^{-\re(s)}|Nj(\delta,z)|^{\re(s)-k}\\ 
\ll & N(y)^{-\frac{k}{2}}\sum_{\gamma,\delta\in\Gamma_\infty\backslash\Gamma, c_{\gamma\delta^{-1}}\neq 0}N(\im(\gamma z))^{\frac{\re(s)-  r +1}{2}}N(\im(\delta z))^{\frac{k-\re(s)- r  +1}{2}}\\ 
\ll & N(y)^{-\frac{k}{2}}\sum_{\gamma,\delta\in\Gamma_\infty\backslash\Gamma}N(\im(\gamma z))^{\frac{\re(s)-r +1}{2}}N(\im(\delta z))^{\frac{k-\re(s)- r +1}{2}},
\end{align*}
which is the product of two Eisenstein series whose absolute convergence is well-known (see, for example, 5.7 Lemma of Chapter I in \cite{F}). So absolute convergence follows if we have 
\[\frac{\re(s)- r +1}{2}> 1 \quad\text{and}\quad \frac{k-\re(s)- r+1}{2}  > 1.\] 
One sees easily that $E_{s,k-s}(z;w)$ transforms correctly under $\Gamma.$ 
 In the above estimate  
 \begin{eqnarray*}
&&
\sum_{\gamma,\delta\in\Gamma_\infty\backslash\Gamma, c_{\gamma\delta^{-1}}\neq 0}N(\im(\gamma z))^{\frac{\re(s)-  r +1}{2}}N(\im(\delta z))^{\frac{k-\re(s)- r  +1}{2}}
\\
&&=E\left(z, \frac{\re(s)-r+1}{2}\right) E\left(z, \frac{k-\re(s)-r+1}{2}\right) -E\left(z, \frac{k-2r+2}{2}\right),
\end{eqnarray*}
where 
$E(z,s):=\sum_{\gamma\in \Gamma_{\infty} \backslash \Gamma}
N(Im(\gamma z))^{-s} =N(y)^{s}+A(s)N(y)^{1-s}+o(1)$.  
By removing the highest terms $N(y)^{\frac{k}{2}-r+1}$ from the difference, the rest are all $o(N(y)^{\frac{k}{2}}). $   This shows that $E_{s,k-s}(z;w)\rightarrow 0$ as $N(y) \rightarrow\infty$, and hence proves part (2) that $E_{s,k-s}(z;w)$ is a cuspform since only one cusp exists. 

For part (3), by Theorem \ref{cohen-analytic} the Cohen kernel are cuspforms. By Lemma \ref{connection}, 
\begin{align*}
&\zeta_F(1-w+s)\zeta_F(1-w+k-s)   E_{s,k-s}^{Hil}(z;w)\\
=&2c_{k,s,D}^2\sum_{\mathfrak n}N(\mathfrak n)^{w-k}T_\mathfrak{n}\bigl( {\mathcal C}^{Hil}_k (z;s)\bigr)\\
=&2c_{k,s,D}^2\sum_{\mathfrak n}N(\mathfrak n)^{w-k}\sum_{f\in\mathcal H_k}\frac{\langle T_\mathfrak{n}{\mathcal C}^{Hil}_k (z;s),f\rangle}{\langle f,f\rangle}f(z)\\
=&2c_{k,s,D}^2\sum_{\mathfrak n}N(\mathfrak n)^{w-k}\sum_{f\in\mathcal H_k}\frac{\langle {\mathcal C}^{Hil}_k (z;s),T_\mathfrak{n} f\rangle}{\langle f,f\rangle}f(z)\\
=&2c_{k,s,D}^2 \sum_{\mathfrak n}N(\mathfrak n)^{w-k}\sum_{f\in\mathcal H_k} a_f(\mathfrak{n})
\frac{\langle {\mathcal C}^{Hil}_k (z;s),f\rangle}{\langle f,f\rangle}f(z),
\end{align*} since $\overline{a_f(\mathfrak{n})}=a_f(\mathfrak{n})$.
We have shown in Theorem \ref{cohen-analytic} that 
\begin{equation*}
2c_{k,s,D}^2\langle  {\mathcal C}^{Hil}_k (z;s),f\rangle=  \frac{2^{5-2k}\pi^2\Gamma(k-1)^2}{e^{\pi i s} \Gamma(s)^2\Gamma(k-s)^2}\Lambda(f,k-s),
\mbox{ for $f\in \mathcal{H}_k$}.
\end{equation*}
By defining
\[E^{^*,Hil}_{s,k-s}(z;w)=2\alpha_{k,s,w,D}
  E_{s,k-s}^{Hil}(z;w)\]
with
 $\alpha_{k,s,w,D}$  in (\ref{con}) and  using the result of   Theorem \ref{cohen-analytic},
 we obtain 
\[E^{*, Hil}_{s,k-s}(z;w)=\sum_{f\in\mathcal H_k}\frac{\Lambda(f,k-w)\Lambda(f,k-s)}{\langle f,f\rangle}f(z)
=\sum_{f\in\mathcal H_k}\frac{\Lambda(f,w)\Lambda(f,s)}{\langle f,f\rangle}f(z). \]

Part (4) follows easily from part (3) since $f$ is a primitive form. Finally,  by part (3), $E^{*, Hil}_{s,k-s}(z;w)$ has meromorphic continuation to all of $s,w\in \mathbb C$, and reflected from properties of $\Lambda(f, s)$, it satisfies functional equations
\[E^{*, Hil}_{s,k-s}(z;w)=E^{*, Hil}_{w,k-w}(z;s),\quad E^{*, Hil}_{k-s,s}(z;w)=E^{*, Hil}_{s,k-s}(z;w),\] 
proving part (5) and hence the whole theorem.
$\Box$
 
\medskip

Using the result about Rankin-Cohen brackets studied in \cite{CO}, we prove Theorem \ref{RC}:

\noindent\emph{Proof of Theorem  \ref{RC}}: 
One checks (from Proposition 1 in  \cite{CO})
\begin{eqnarray*}
&& 
\left(\frac{( {k_1}-1)!\nu!}{( {k_1}+\nu-1)!}\right)^2
[E_{  {k_1}}, E_{  {k_2}} ]^{Hil}_{(\nu, \nu ) }
 =
\sum_{\delta\in \Gamma_{\infty}\backslash \Gamma}  
j(\delta, z)^{- {k_1}\mathbf{1}}  E_{  {k_2}}^{(\nu)}|_{ {k_2}+2\nu}\delta.
\end{eqnarray*}
Since 
\[E_{  {k_2}}^{(\nu)}= \left(\frac{( {k_2}-1+\nu)!}{( {k_2}-1)!} \right)^2\sum_{\gamma\in  \Gamma^+_{\infty}\backslash\Gamma}
N(c_{\gamma})^{\nu} 
j(\gamma, z)^{-( {k_2}+\nu)\mathbf{1}} \]   by Lemma 1 in \cite{CO}, this in turn is equal to

\begin{align*}
&\left(
\frac{( {k_2}-1+\nu)!}{( {k_2}-1)!}\right)^2\sum_{\delta, \gamma \in \Gamma^+_{\infty}\backslash \Gamma}  
j(\delta, z)^{- {k_1}\mathbf{1}} 
N(c_{\gamma})^{\nu} 
j(\gamma, \delta(z))^{-( {k_2}+\nu)\mathbf{1}}
j(\delta, z)^{-( {k_2}+2\nu)\mathbf{1}}\\
=& \left(
\frac{( {k_2}-1+\nu)!}{( {k_2}-1)!}\right)^2
 \sum_{\delta, \gamma \in \Gamma_{\infty}^+\backslash \Gamma} N(c_{\gamma\delta^{-1}})^{\nu} 
j(\delta, z)^{- {(k_1+\nu)}\mathbf{1}} j(\gamma,z)^{- {(k_2+\nu)}\mathbf{1}}.
\end{align*}
In such a particular situation, we see easily that the summand is actually well-defined on $\Gamma_\infty\backslash\Gamma$. Denote $S$ the subset of $(\delta,\gamma)\in (\Gamma_{\infty}^+\backslash \Gamma)^2$ with $c_{\gamma\delta^{-1}}\neq 0$ and $S_{\pm,\pm}\subset S$ consists of elements whose $c_{\gamma\delta^{-1}}$ has the prescribed sign vector. In particular, $S_{+,+}$ consists of elements with $c_{\gamma\delta^{-1}}\gg 0$. It is obvious that the sums over these four subsets are all equal, since we may multiply on left by $\pm I$ and $\pm\text{diag} (\varepsilon_0,\varepsilon_0^{-1})\in\Gamma_\infty$ to adjust the signs; here $\varepsilon_0$ is the fundamental unit. That said, we have
\[\left(\frac{( {k_1}-1)!\nu!}{( {k_1}+\nu-1)!}\right)^2
[E_{  {k_1}}, E_{  {k_2}} ]^{Hil}_{(\nu, \nu ) }
 = 4\left(
\frac{( {k_2}-1+\nu)!}{( {k_2}-1)!}\right)^2E^{Hil}_{ {k_1} +\nu,  {k_2} +\nu}(z, \nu+1),\]
and it finishes the proof. $\Box$

\medskip

\noindent\emph{Proof of Theorem \ref{ra1}}: We follow the lines in Section 8A of \cite{CD2} and first prove that for even $m$ and odd $\ell$ with $1\leq m,\ell\leq k-1$, both of $E^{*, Hil}_{m, k-m}(z; k-1)$ and $E^{*, Hil}_{k-2, 2}(z; \ell)$ have rational Fourier coefficients. By the functional equations in Theorem \ref{main1}, 
\[E^{*, Hil}_{m, k-m}(z; k-1)=E^{*, Hil}_{m, k-m}(z; 1),\]
and it suffices to prove that the Fourier coefficients of $E^{*, Hil}_{m, k-m}(z; \ell)$ are rational for even $m$ and odd $\ell$ with $1\leq \ell<m\leq k/2$.
By Theorem \ref{RC}, $E^{*, Hil}_{m, k-m}(z; \ell)=C[E_{m +1-\ell}, E_{k+1-m-\ell}]^{Hil}_{\ell-1},$ where $C$ is a rational multiple of $\pi^{2-2\ell}$ by Theorem 9.8 on page 515 of \cite{N}. It follows that the Fourier coefficients of $E^{*, Hil}_{m, k-m}(z; \ell)$ belong to $\mathbb{Q}$.

Next, for primitive $f\in\mathcal H_k$, by Proposition 4.15 of \cite{Sh} and Theorem \ref{main1}, we have $\langle f, E^{*, Hil}_{k-1, 2}(z; k-1)\rangle=\alpha_f \langle f, \, f\rangle=\Lambda(f, k-1)\Lambda(f,k-2),$ for certain
$\alpha_f\in \mathbb{Q}(f)$. Again by Proposition 4.15, since $E^{*, Hil}_{k-1, 2}(z; k-1)$ has rational Fourier coefficients, $\alpha_{f}^\sigma=\alpha_{f^\sigma}$ for each $\sigma\in\text{Gal}(\bar{\mathbb{Q}}/\mathbb{Q})$. Also note  $\alpha_f \neq 0 $ because of the convergence of the Euler product of $\Lambda(f,s)$ for $\re(s)\geq k/2+1$ (see Kim-Sarnak's bound in \cite{KS}). Define 
\[\omega_+(f)=\frac{\alpha_f\langle f,f\rangle}{\Lambda(f,k-1)},\quad \omega_-(f)=\frac{\langle f,f\rangle}{\Lambda(f,k-2)}.\] Then for even $m$, odd $\ell$ with $1\leq m,\ell\leq k-1$, 
\[\frac{\Lambda(f,m)}{\omega_+(f)}=\frac{\langle f, E^{*, Hil}_{m, k-m}(z; k-1)\rangle}{\alpha_f\langle f,f\rangle}\in \mathbb{Q}(f)\]
again by Proposition 4.15 of \cite{Sh} and similarly $\frac{\Lambda(f,\ell)}{\omega_-(f)}\in\mathbb{Q}(f)$. It is clear that $\omega_+(f)\omega_-(f)=\langle f,f\rangle$. Finally, the assertion (4.16) of \cite{Sh} and that $\alpha_{f}^\sigma=\alpha_{f^\sigma}$ for each $\sigma\in\text{Gal}(\bar{\mathbb{Q}}/\mathbb{Q})$ implies that
\[\left(\frac{\Lambda(f,m)}{\omega_+(f)}\right)^\sigma=\frac{\Lambda(f^\sigma,m)}{\omega_+(f^\sigma)},\quad \left(\frac{\Lambda(f,\ell)}{\omega_-(f)}\right)^\sigma=\frac{\Lambda(f^\sigma,\ell)}{\omega_-(f^\sigma)},\]
finishing the proof.
$\Box$

\bibliographystyle{amsplain}

\end{document}